\newtheorem{thm}{Theorem}[section]
\newtheorem{lem}[thm]{Lemma}
\newtheorem{cor}[thm]{Corollary}
\numberwithin{equation}{section}
\theoremstyle{definition}
\newtheorem{rmk}[thm]{Remark}
\newtheorem{defn}[thm]{Definition} 
\newtheorem{example}[thm]{Example}
\newtheorem{question}[thm]{Question}
\newtheorem{prop-defn}[thm]{Proposition-Definition}
\title{S-equivalence for algebraic stacks}
\author{Xucheng Zhang}
\date{}
\address{Yau Mathematical Sciences Center \\
Tsinghua University \\
Beijing 100084 \\
China \\
Email: \href{mailto: zhangxucheng@mail.tsinghua.edu.cn}{zhangxucheng@mail.tsinghua.edu.cn}}
\begin{document}

\begin{abstract}
We generalize the notion of S-equivalence, previously defined for semistable vector bundles, to points in arbitrary algebraic stacks and use it to describe the identification of points when passing to the moduli space. As applications, we recover the classical S-equivalence by considering the identification of semistable vector bundles in the moduli space, and discuss its relation to separatedness of the moduli space.
\end{abstract}

\maketitle


\section{Introduction}
This paper fits into our long-term goal of trying to
\begin{enumerate}
\item[(S1)]
find more (proper) moduli spaces of vector bundles over a curve and, when such a moduli space exists,
\item[(S2)]
exploring which points get identified in the moduli space.
\end{enumerate}
Using Geometric Invariant Theory (GIT) there exists a projective variety parameterizing S-equivalence classes of semistable vector bundles over a curve. Perhaps surprisingly, if we want a moduli space of vector bundles containing unstable objects, GIT won't be of much help as we prove in \cite{dario-zhang} that the semistable locus is the maximal open locus to which GIT can be applied to construct a moduli space. Fortunately, there is an alternative approach to (S1) in this situation: the existence criterion of moduli spaces for algebraic stacks \cite{MR4665776}. Using this result we find in \cite{dario-zhang} separated (non-proper) moduli spaces of vector bundles containing unstable objects. Contrary to the GIT construction, in the latter approach we only know from the general fact (\cite[Theorem 5.3.1 (4)]{MR3272912}) that two vector bundles get identified in the moduli spaces if and only if their orbit closures intersect. Our original motivation is to study when orbit closures of vector bundles intersect.

In this paper we show for a large class of algebraic stacks (including all algebraic stacks of characteristic $0$ and quotient stacks), the condition ``orbit closures intersect'' can be translated into a more readily verifiable one. In particular, this gives a complete answer to (S2). Two guiding examples we have in mind are stack of semistable vector bundles over a curve and quotient stacks. 
\begin{itemize}
\item 
In the first example, two points are identified in the moduli space if and only if they are S-equivalent, and in this case both of them degenerate to the unique closed point (a polystable vector bundle) in their S-equivalence class. Using the Rees construction, any degeneration of vector bundles can be realized as a morphism from $[\mathbf{A}^1/\mathbf{G}_m]$.
\item 
In the second example, two points are identified in the moduli space if and only if their orbit closures intersect, and in this case both of them degenerate to the unique closed point (a fixed point) in their intersection. Using the Hilbert-Mumford criterion, any degeneration of points can be realized as a morphism from $[\mathbf{A}^1/\mathbf{G}_m]$.
\end{itemize}
This leads us to consider morphisms from $[\mathbf{A}^1/\mathbf{G}_m]$, which are called ``filtrations'' in \cite{DHL2014} or ``very close degenerations'' in \cite{MR3758902}, and they give rise to the expected analogue of S-equivalence for points in arbitrary algebraic stacks, referred to as ``quasi-S-equivalence'' (Definition \ref{def:quasi-S}). Furthermore, if an algebraic stack is locally reductive (in the sense of \cite[Definition 2.5]{MR4665776}) we can also define ``quasi-Jordan-H\"{o}lder filtrations'' (Definition \ref{defn:JH}) for its points, which plays the same role for quasi-S-equivalence as Jordan-H\"{o}lder filtration for S-equivalence. The identification of points in the moduli space can be described using either quasi-S-equivalence or quasi-Jordan-H\"{o}lder filtrations.
\begin{thm}[Theorem \ref{thm:equivalence}]\label{thm:in-equivalence}
Let $\mathscr{X}$ be a locally reductive algebraic stack, locally of finite type over a field $k$. Suppose $\mathscr{X}$ admits an adequate moduli space $\phi: \mathscr{X} \to X$. Then for any geometric points $x_1,x_2 \in \mathscr{X}(\kappa)$, the following are equivalent:
\begin{itemize}
\item 
The points $x_1,x_2$ are quasi-S-equivalent.
\item 
There exist quasi-Jordan-H\"{o}lder filtrations witnessing the quasi-S-equivalence of $x_1,x_2$.
\item 
The equality $\phi(x_1)=\phi(x_2)$ holds.
\end{itemize}
In particular, quasi-S-equivalence is an equivalence relation on points of $\mathscr{X}$.
\end{thm}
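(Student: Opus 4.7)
The strategy is to prove $(2) \Rightarrow (1) \Rightarrow (3) \Rightarrow (2)$, from which the final assertion follows because condition $(3)$ is visibly an equivalence relation. The implications $(2) \Rightarrow (1)$ and $(1) \Rightarrow (3)$ should be essentially formal. The first is by the very definition of quasi-Jordan--H\"older filtration: such a filtration packages a chain of morphisms from $[\mathbf{A}^1/\mathbf{G}_m]$ exhibiting quasi-S-equivalence. For the second, any filtration $f:[\mathbf{A}^1/\mathbf{G}_m]\to\mathscr{X}$ composed with $\phi$ gives a morphism $[\mathbf{A}^1/\mathbf{G}_m]\to X$ that factors through the adequate moduli space of $[\mathbf{A}^1/\mathbf{G}_m]$, which is $\mathrm{Spec}(k[t]^{\mathbf{G}_m})=\mathrm{Spec}(k)$; hence $\phi$ sends the generic and special fibers of $f$ to the same point of $X$. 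Iterating along a chain witnessing quasi-S-equivalence gives $\phi(x_1)=\phi(x_2)$.

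The substantive content is $(3) \Rightarrow (2)$. The plan is to reduce to a local quotient model and then invoke Hilbert--Mumford. Since $\mathscr{X}$ is locally reductive and admits an adequate moduli space, the \'etale-local structure theorems underpinning \cite{MR4665776} provide an \'etale neighborhood of the common image $\phi(x_1)=\phi(x_2)$ of the form $[\mathrm{Spec}(A)/G]\to\mathscr{X}$ with $G$ reductive, such that the fiber of the induced map on moduli spaces contains a unique closed orbit $O$. Both $x_1,x_2$ lift to points of $[\mathrm{Spec}(A)/G]$ whose orbit closures contain $O$, and by the Hilbert--Mumford criterion for reductive group actions there are cocharacters $\lambda_i:\mathbf{G}_m\to G$ with $\lim_{t\to 0}\lambda_i(t)\cdot x_i$ lying in $O$. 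These cocharacters translate into filtrations $f_i:[\mathbf{A}^1/\mathbf{G}_m]\to\mathscr{X}$ with $f_i(1)=x_i$ and special fibers in $O$; since $O$ is a single geometric orbit, after a translation we may arrange $f_1(0)=f_2(0)$, producing the required quasi-S-equivalence.

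To upgrade to a full quasi-Jordan--H\"older filtration I would iterate the construction. At each step, the associated graded has stabilizer with strictly smaller unipotent radical (contracted by the cocharacter), so after finitely many steps one arrives at a point whose orbit is already closed in $\mathscr{X}$. This mirrors the classical chain of sub-quotients in Jordan--H\"older, with closedness of the terminal orbit playing the role of polystability.

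The main obstacle is the \emph{globalization} step: morphisms from $[\mathbf{A}^1/\mathbf{G}_m]$ are not \'etale-local invariants of $\mathscr{X}$, so one cannot simply transplant the local cocharacter $\lambda_i$ into a filtration on $\mathscr{X}$. The cleanest remedy is to appeal to the properness (or unramifiedness in the appropriate sense) of the stack of filtrations on a locally reductive stack \`a la Halpern-Leistner, which guarantees that the local filtrations constructed above extend uniquely to filtrations on $\mathscr{X}$ once their limits are fixed in the closed orbit. A secondary technical point is checking that these filtrations can be chained, which reduces to the fact that the common special fiber lies in the closed orbit and is hence preserved by the \'etale atlas. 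Granting these points, quasi-S-equivalence and quasi-JH-filtration-equivalence both coincide with equality of images in $X$, and the equivalence-relation statement is immediate.
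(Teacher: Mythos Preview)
Your cycle $(2)\Rightarrow(1)\Rightarrow(3)\Rightarrow(2)$ is the same as the paper's, and the two formal implications are handled as in the paper (the paper phrases $(1)\Rightarrow(3)$ via the orbit-closure property of adequate moduli spaces rather than the factorization through $\mathrm{Spec}(\kappa)$, but your argument is equally valid). The difference is in the substantive step and stems from a misreading of Definition~\ref{defn:JH}: a quasi-Jordan--H\"older filtration is \emph{not} a chain or an iterated sequence of $[\mathbf{A}^1/\mathbf{G}_m]$-maps, it is a \emph{single} filtration $f$ of $x$ such that $f(0)$ is the unique closed point in $\overline{\{x\}}$. There is nothing to iterate and no need to contract unipotent radicals step by step.

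Once this is understood, the paper's argument for $(3)\Rightarrow(2)$ is much shorter than yours. If $\phi(x_1)=\phi(x_2)$, then the fiber $\phi^{-1}(\phi(x_1))$ contains a unique closed point $x_0$, which must lie in both $\overline{\{x_1\}}$ and $\overline{\{x_2\}}$ (Lemma~\ref{lem:unique-closed}). The paper then invokes \cite[Lemma~3.24]{MR4665776} (restated as Lemma~\ref{lem:fromTheta}) as a black box: in a locally reductive stack, any specialization to a closed point is realized by a map $[\mathbf{A}^1/\mathbf{G}_m]\to\mathscr{X}$. This immediately gives filtrations $f_i$ with $f_i(1)=x_i$ and $f_1(0)=f_2(0)=x_0$, which are quasi-Jordan--H\"older by definition. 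Your ``main obstacle'' of globalizing the local cocharacter is precisely what that lemma already does; you are essentially proposing to reprove it rather than cite it, and the properness of the stack of filtrations you mention is indeed part of how that lemma is established in \cite{MR4665776}. So your route is not wrong, but it reinvents a wheel the paper takes off the shelf.
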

\begin{rmk}
The technical condition ``local reductivity'' emerges due to the lack of a local structure theorem of algebraic stacks in positive characteristic, which requires that every point of the stack specializes to a closed point and \'{e}tale locally around any closed point the stack is of the form $[\mathrm{Spec}(A)/\mathrm{GL}_N]$ for some $N$. 
For an algebraic stack $\mathscr{X}$ admitting an adequate moduli space, local reductivity is automatic in the following cases:
\begin{itemize}
\item 
The adequate moduli space of $\mathscr{X}$ is a good moduli space (e.g. if $\mathscr{X}$ is of characteristic $0$). 

Indeed, in this case every closed point of $\mathscr{X}$ has linearly reductive stabilizer by \cite[Theorem 4.1]{MR4665776} and we can apply the local structure theorem \cite[Theorem 1.1]{Alper2019-1} to conclude, as observed in \cite[Remark 2.6]{MR4665776}.
\item
The stack $\mathscr{X}$ admits a representable morphism to a quotient stack for an action of a reductive group 
(e.g. if $\mathscr{X}$ is a quasi-compact open substack of the stack of coherent sheaves over a curve).

Indeed, this is covered in the proof of \cite[Proposition 3.28]{dario-zhang}.
\end{itemize}
\end{rmk}
For the stack of coherent sheaves over a curve, morphisms from $[\mathbf{A}^1/\mathbf{G}_m]$ can be described in terms of genuine filtrations via the Rees construction. Therefore in this case Theorem \ref{thm:in-equivalence} has the following simple form.
\begin{cor}[Corollary \ref{cor:main}]
Let $C$ be a smooth projective connected curve over an algebraically closed field $k$. Denote by $\mathscr{C}oh_n$ the stack of rank $n$ coherent sheaves over $C$. Let $\mathscr{U} \subseteq \mathscr{C}oh_n$ be an open substack that admits an adequate moduli space. Then two geometric points $\mathcal{E}_1,\mathcal{E}_2 \in \mathscr{U}(\kappa)$ are identified in the adequate moduli space if and only if there exist filtrations $\mathcal{E}_1^\bullet,\mathcal{E}_2^\bullet$ of $\mathcal{E}_1,\mathcal{E}_2$ respectively such that $\mathrm{gr}(\mathcal{E}_1^\bullet)=\mathrm{gr}(\mathcal{E}_2^\bullet) \in \mathscr{U}(\kappa)$.
\end{cor}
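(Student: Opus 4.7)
The plan is to deduce this corollary directly from Theorem \ref{thm:in-equivalence} by (i) verifying that $\mathscr{U}$ satisfies the hypotheses of that theorem, and (ii) rewriting the conclusion in terms of genuine coherent-sheaf filtrations via the Rees construction. Both steps are largely bookkeeping, so the challenge is not a single hard lemma but making sure the dictionary between $[\mathbf{A}^1/\mathbf{G}_m]$-maps and filtrations is compatible with everything in sight.

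First I would check local reductivity of $\mathscr{U}$. The stack $\mathscr{C}oh_n$ of rank-$n$ coherent sheaves on the curve $C$ admits a representable morphism to a quotient stack for an action of a reductive group; this is exactly the situation recorded in the second bullet of the remark after Theorem \ref{thm:in-equivalence}, and it passes to any open substack $\mathscr{U}$. Combined with the assumption that $\mathscr{U}$ admits an adequate moduli space $\phi$, this places $\mathscr{U}$ in the scope of Theorem \ref{thm:in-equivalence}, which converts the identification $\phi(\mathcal{E}_1)=\phi(\mathcal{E}_2)$ into quasi-S-equivalence: the existence of morphisms $f_i \colon [\mathbf{A}^1/\mathbf{G}_m] \to \mathscr{U}$ with $f_i(1) \cong \mathcal{E}_i$ and $f_1(0) \cong f_2(0)$.

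Next I would invoke the Rees dictionary to translate each $f_i$ into a filtration. A morphism $f \colon [\mathbf{A}^1/\mathbf{G}_m] \to \mathscr{C}oh_n$ is the same data as a $\mathbf{G}_m$-equivariant coherent sheaf on $\mathbf{A}^1 \times C$ flat over $\mathbf{A}^1$, and the Rees construction identifies such data with a descending $\mathbf{Z}$-indexed exhaustive filtration $\mathcal{E}^\bullet$ of a coherent sheaf $\mathcal{E}$ on $C$: the fiber over $1$ is $\mathcal{E}$ itself, while the fiber over $0$ is the associated graded $\mathrm{gr}(\mathcal{E}^\bullet)$. Applied to $f_1,f_2$, this rewrites quasi-S-equivalence as the existence of filtrations $\mathcal{E}_i^\bullet$ of $\mathcal{E}_i$ with $\mathrm{gr}(\mathcal{E}_1^\bullet) \cong \mathrm{gr}(\mathcal{E}_2^\bullet)$. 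Since $f_i$ factors through $\mathscr{U}$, the special fiber also lies in $\mathscr{U}(\kappa)$, giving the condition $\mathrm{gr}(\mathcal{E}_i^\bullet) \in \mathscr{U}(\kappa)$. Conversely, any pair of filtrations with these properties produces $f_i$ landing in $\mathscr{U}$ by the openness of $\mathscr{U}$ and the fact that both endpoints lie in $\mathscr{U}(\kappa)$.

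The step most likely to cause friction is a careful formulation of the Rees equivalence in families over $\mathbf{A}^1$ and checking that it is symmetric in the two endpoints and compatible with restriction to the open substack $\mathscr{U}$. This is standard but deserves to be spelled out so that the ``if'' and ``only if'' directions are treated on an equal footing; everything else is a direct consequence of Theorem \ref{thm:in-equivalence}.
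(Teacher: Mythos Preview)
Your proposal is correct and follows essentially the same route as the paper: the paper's proof of Corollary \ref{cor:main} is a one-line citation of Lemma \ref{lem:S-in-Coh} (the Rees dictionary you describe, which is Example \ref{eg:filtration} (1)) together with Theorem \ref{thm:equivalence}, and the local reductivity of $\mathscr{U}$ is handled exactly as you indicate via the second bullet of the remark after Theorem \ref{thm:in-equivalence}. Your only addition is spelling out why the morphism $f_i$ lands entirely in $\mathscr{U}$ once both $f_i(1)$ and $f_i(0)$ do, which the paper leaves implicit.
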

In particular, the identification of semistable vector bundles in the moduli space gives rise to the notion of S-equivalence.
\begin{cor}[Corollary \ref{cor:usual-S}]\label{In-cor:usual-S}
Two semistable vector bundles are identified in the adequate moduli space if and only if they are S-equivalent.
\end{cor}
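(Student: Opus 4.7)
The plan is to apply the preceding Corollary \ref{cor:main} to the open substack $\mathscr{U} \subseteq \mathscr{C}oh_n$ of semistable vector bundles of rank $n$ and fixed degree $d$. This substack is indeed open (semistability and local freeness of rank $n$ are open conditions) and admits the classical Seshadri GIT moduli space as an adequate (in fact good) moduli space, so Corollary \ref{cor:main} applies. Thus it suffices to show: two semistable bundles $\mathcal{E}_1,\mathcal{E}_2$ admit filtrations $\mathcal{E}_1^\bullet,\mathcal{E}_2^\bullet$ with $\mathrm{gr}(\mathcal{E}_1^\bullet)=\mathrm{gr}(\mathcal{E}_2^\bullet) \in \mathscr{U}(\kappa)$ if and only if they are S-equivalent in the classical sense, i.e.\ have the same multiset of Jordan--H\"older factors.

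For the ``only if'' direction, let $\mathcal{E}_i^\bullet$ be a Jordan--H\"older filtration of $\mathcal{E}_i$ as a semistable bundle; its associated graded is the direct sum of the stable JH-factors, which is polystable (in particular semistable) and hence lies in $\mathscr{U}(\kappa)$. By definition of classical S-equivalence these polystable bundles are isomorphic, so the filtrations witness the condition of Corollary \ref{cor:main}.

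For the ``if'' direction, suppose filtrations $\mathcal{E}_i^\bullet$ exist with $\mathrm{gr}(\mathcal{E}_1^\bullet)=\mathrm{gr}(\mathcal{E}_2^\bullet)$ semistable of the common slope $\mu=d/n$. The associated graded is a direct sum of the successive quotients of the filtration; since a direct sum of vector bundles is semistable of slope $\mu$ if and only if each summand is semistable of slope $\mu$, each graded piece of $\mathcal{E}_i^\bullet$ is itself semistable of slope $\mu$. Refining each such piece by a JH filtration produces a filtration of $\mathcal{E}_i$ whose successive quotients are stable of slope $\mu$, that is, a JH filtration of $\mathcal{E}_i$. Its associated graded is the direct sum of the stable JH-factors of $\mathcal{E}_i$, and it coincides with the associated graded of the corresponding refinement of $\mathrm{gr}(\mathcal{E}_i^\bullet)$. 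Since $\mathrm{gr}(\mathcal{E}_1^\bullet)=\mathrm{gr}(\mathcal{E}_2^\bullet)$, the multisets of JH-factors of $\mathcal{E}_1$ and $\mathcal{E}_2$ agree, so they are S-equivalent.

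There is no serious obstacle here: the content is really contained in Corollary \ref{cor:main}, and the remaining work is the elementary linear-algebra-type observation that a filtration of a semistable bundle whose associated graded is semistable of the same slope can be refined to a JH filtration, so that the JH-factors of $\mathcal{E}_i$ are read off from $\mathrm{gr}(\mathcal{E}_i^\bullet)$. The only point worth flagging is the invocation of the classical existence of the adequate moduli space of $\mathscr{U}$, which justifies the application of Corollary \ref{cor:main} in this concrete setting.
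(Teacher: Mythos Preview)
Your proof is correct and follows essentially the same route as the paper: the paper deduces the corollary from Corollary~\ref{cor:main} together with Lemma~\ref{lem:qS-recover}, and your argument simply inlines the content of that lemma (refining a filtration with semistable associated graded to a Jordan--H\"older filtration). One cosmetic point: you have swapped the labels ``if'' and ``only if'' relative to the biconditional you set up, though both directions are argued correctly.
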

\begin{rmk}
In the GIT approach a priori we need to know the ``correct'' equivalence relation on the parametrized objects to get a nice representable moduli functor. This is how the S-equivalence of semistable vector bundles appears as a way to eliminate the non-separatedness. If we apply the existence criterion developed in \cite{MR4665776} to the stack $\mathscr{B}un_n^{ss}$ of rank $n$ semistable vector bundles, we again obtain a proper moduli space (see, e.g. \cite{MR4480534}) and Corollary \ref{In-cor:usual-S} implies that the S-equivalence appears naturally if we consider the identification of semistable vector bundles in the moduli space. In turn this gives the correct equivalence relation pertaining to the original moduli functor.
\end{rmk}
The identification of points in an algebraic stack naturally relates to the separatedness of its moduli space (which is equivalent to the S-completeness of the stack by \cite[Proposition 3.48 (2)]{MR4665776}), as the special fibers of two families with isomorphic generic fibers must be identified if we want a separated moduli space. Such special fibers are therefore called non-separated (Definition \ref{defn:non-sep}). For an S-complete algebraic stack we want to describe its non-separated points and characterize when its open substacks are also S-complete. To this end we introduce another condition called axis-completeness (Definition \ref{defn:axis-comp}), which might be easier to check than S-completeness in practice.
\begin{thm}[Theorem \ref{thm:equivalence-non-opp} and Theorem \ref{thm:open-S-comp}]
Let $\mathscr{X}$ be a smooth S-complete algebraic stack with affine stabilizers over a field $k$. Then
\begin{itemize}
\item 
Two geometric points in $\mathscr{X}$ are non-separated if and only if they have opposite filtrations (Definition \ref{defn:opp-fil}).
\item
An open substack $\mathscr{U} \subseteq \mathscr{X}$ is S-complete if and only if the open immersion $\mathscr{U} \hookrightarrow \mathscr{X}$ is axis-complete.
\end{itemize}
\end{thm}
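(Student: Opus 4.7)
The proof of both statements rests on the interplay between the nodal stack $\mathrm{ST}_R=[\mathrm{Spec}(R[s,t]/(st))/\mathbf{G}_m]$, its two arms $\Theta_R^\pm$, and the smoothing $\mathscr{Y}_R:=[\mathrm{Spec}(R[s,t]/(st-\pi))/\mathbf{G}_m]$ (for $\pi$ a uniformizer of $R$), which has the same special fibre $\mathrm{ST}_\kappa$ as $\mathrm{ST}_R$ but generic fibre $\mathrm{Spec}(K)$. The critical structural fact is that $\mathscr{Y}_R\setminus\{0\}$ is precisely the gluing $\mathrm{Spec}(R)\cup_{\mathrm{Spec}(K)}\mathrm{Spec}(R)$ of two $R$-axes along the common generic point, so morphisms out of it encode exactly the data of two $R$-families with a compatible isomorphism on generic fibres.

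For the backward direction of the first equivalence, starting from a morphism $h\colon\mathrm{ST}_\kappa\to\mathscr{X}$ presenting opposite filtrations of $x_1,x_2$, I plan to lift $h$ to $\widetilde h\colon\mathscr{Y}_R\to\mathscr{X}$ over some DVR $R$ with residue field $\kappa$. Smoothness of $\mathscr{X}$ makes the obstructions to extending $h$ across the flat thickenings $\mathrm{ST}_\kappa\hookrightarrow\mathscr{Y}_R\times_R\mathrm{Spec}(R/\pi^n)$ vanish, and Artin approximation algebraizes the resulting compatible system of formal lifts. Pulling $\widetilde h$ back along the $R$-sections $(s,t)=(1,\pi)$ and $(\pi,1)$ of $\mathscr{Y}_R$ produces families $f_1,f_2\colon\mathrm{Spec}(R)\to\mathscr{X}$ with $f_i|_\kappa=x_i$, and their generic fibres are canonically isomorphic since both factor through the single generic point $\mathrm{Spec}(K)=\mathscr{Y}_R|_K$. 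For the forward direction, given non-separated data $(f_1,f_2,\alpha)$, I will first assemble the gluing into a morphism $\mathscr{Y}_R\setminus\{0\}\to\mathscr{X}$ via the identification above. Because $\mathscr{Y}_R$ is smooth and the removed closed point is codimension two, the affine-stabilizer hypothesis supports a Hartogs-type extension yielding a unique morphism $\mathscr{Y}_R\to\mathscr{X}$, whose restriction to the central fibre $\mathrm{ST}_\kappa$ reads off the desired opposite filtrations along the two arms.

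For the second equivalence, the key observation is that the closed point of $\mathrm{ST}_R$ coincides with the closed points of each arm $\Theta_R^\pm$, so S-completeness of $\mathscr{U}$ and axis-completeness of the inclusion $\mathscr{U}\hookrightarrow\mathscr{X}$ both test extension through the same locus. The direction ``axis-complete $\Rightarrow$ $\mathscr{U}$ S-complete'' is essentially formal: given $\mathrm{ST}_R\setminus\{0\}\to\mathscr{U}$, extend it to $\mathrm{ST}_R\to\mathscr{X}$ by S-completeness of $\mathscr{X}$, then apply axis-completeness separately to each arm $\Theta_R^\pm\to\mathscr{X}$ to force the common closed point inside $\mathscr{U}$. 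For the reverse direction, given $\Theta_R\to\mathscr{X}$ with $\Theta_R\setminus\{0\}\subseteq\mathscr{U}$, I plan to complement the given $\Theta_R^+$-datum with a compatible $\Theta_R^-$-datum so as to build an auxiliary $\mathrm{ST}_R\setminus\{0\}\to\mathscr{U}$, whose unique extension under S-completeness of $\mathscr{U}$ then pushes the origin of $\Theta_R$ into $\mathscr{U}$; the opposite arm should come either from the first part applied to a suitable non-separated pair supported in $\mathscr{U}$, or directly from the local structure theorem at the graded point afforded by smoothness and affine stabilizers. I expect the main obstacle of the whole proof to be precisely this construction of the opposite arm inside $\mathscr{U}$: it must agree with the given $\Theta_R^+$-datum on the central line while staying in $\mathscr{U}$, and this is the step where the affine-stabilizer hypothesis becomes essential, providing the local reductive slice through the graded point needed to realise the opposite filtration without leaving $\mathscr{U}$.
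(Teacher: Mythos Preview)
Your treatment of the first bullet is essentially on track, but you have swapped the roles of the hypotheses. The forward implication (non-separated $\Rightarrow$ opposite filtrations) uses \emph{only} S-completeness: extending $\mathscr{Y}_R\setminus\{0\}\to\mathscr{X}$ across the origin is precisely what S-completeness of $\mathscr{X}$ asserts, not a Hartogs principle derivable from affine stabilizers (smoothness plus affine stabilizers does not imply S-completeness---$\mathscr{B}un_n$ is already a counterexample). It is the \emph{backward} implication where smoothness and the affine-stabilizer hypothesis enter, via the deformation-theoretic lift and algebraization; the paper uses coherent completeness rather than Artin approximation, but your outline there is correct.

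The second bullet has a genuine gap rooted in a misreading of axis-completeness. By Definition~\ref{defn:axis-comp}, axis-completeness tests extension of a map from the \emph{whole} punctured nodal curve $[\mathrm{Spec}(\kappa[x,y]/xy)/\mathbf{G}_{m,\kappa}]\setminus\{0\}$ over a \emph{field} $\kappa$, not from a single punctured arm $\Theta_R\setminus\{0\}$ over a DVR. In the direction ``$\mathscr{U}$ S-complete $\Rightarrow$ axis-complete'' you therefore start with \emph{both} arms already given, and your ``main obstacle''---constructing the opposite arm inside $\mathscr{U}$---is a phantom. The actual step is to reuse the lift from the first bullet: deform the given nodal map to $\hat f\colon\mathscr{Y}_R\to\mathscr{X}$ with $R=\kappa[[\pi]]$; openness of $\mathscr{U}$ forces $\hat f(\mathscr{Y}_R\setminus\{0\})\subseteq\mathscr{U}$; S-completeness of $\mathscr{U}$ then gives $f(0)=\hat f(0)\in\mathscr{U}$. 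For the converse direction, the test datum for S-completeness is a map $\mathscr{Y}_R\setminus\{0\}\to\mathscr{U}$ (not your nodal $\mathrm{ST}_R\setminus\{0\}$); extend it to $\mathscr{Y}_R\to\mathscr{X}$ by S-completeness of $\mathscr{X}$, restrict to the special fibre $xy=0$, and apply axis-completeness \emph{once} to that nodal map to conclude the origin lands in $\mathscr{U}$.
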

For the stack of coherent sheaves over a curve, this recovers \cite[Corollary 3.23 and Proposition 3.17]{dario-zhang}.
\begin{cor}[Corollary \ref{cor:non=opp-in-Coh} and Corollary \ref{cor:nonsep=id}]
Let $C$ be a smooth projective connected curve over an algebraically closed field $k$. Denote by $\mathscr{C}oh_n$ the stack of rank $n$ coherent sheaves over $C$. Then
\begin{itemize}
\item 
Two geometric points $\mathcal{E},\mathcal{E}' \in \mathscr{C}oh_n$ are non-separated if and only if they have opposite filtrations $\mathcal{E}^\bullet,\mathcal{E}'_\bullet$ (Example \ref{eg:opp-in-Coh}).
\item 
An open substack $\mathscr{U} \subseteq \mathscr{X}$ is S-complete if and only if for any geometric points $\mathcal{E},\mathcal{E}' \in \mathscr{U}(\kappa)$ with opposite filtrations $\mathcal{E}^\bullet,\mathcal{E}'_\bullet$, we have $\mathrm{gr}(\mathcal{E}^\bullet)=\mathrm{gr}(\mathcal{E}'_\bullet) \in \mathscr{U}(\kappa)$.
\end{itemize}
\end{cor}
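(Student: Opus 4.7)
The plan is to obtain both statements as specializations of Theorems \ref{thm:equivalence-non-opp} and \ref{thm:open-S-comp} to $\mathscr{C}oh_n$, so the first task is to verify the hypotheses of those theorems. The stack $\mathscr{C}oh_n$ is smooth over $k$ because $\mathrm{Ext}^{\geq 2}(\mathcal{E},\mathcal{E})$ vanishes on a smooth curve, and its stabilizers are affine since $\mathrm{Aut}(\mathcal{E})$ is an open subscheme of the affine space $\mathrm{End}(\mathcal{E})$. S-completeness of $\mathscr{C}oh_n$ is well-known and can for instance be taken from \cite{MR4480534} or \cite{dario-zhang}, so both theorems apply directly.

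For the first bullet, Theorem \ref{thm:equivalence-non-opp} identifies non-separatedness with the existence of opposite filtrations in the sense of Definition \ref{defn:opp-fil}. Via the Rees construction, a morphism $[\mathbf{A}^1/\mathbf{G}_m] \to \mathscr{C}oh_n$ whose underlying point is $\mathcal{E}$ is precisely the datum of a $\mathbf{Z}$-indexed filtration of $\mathcal{E}$. The notion of ``opposite'' then unpacks, as Example \ref{eg:opp-in-Coh} records, into the concrete condition on $\mathcal{E}^\bullet$ and $\mathcal{E}'_\bullet$ quoted in the corollary; substitution into the statement of the theorem yields the first bullet.

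For the second bullet, I would apply Theorem \ref{thm:open-S-comp} to the open immersion $\mathscr{U} \hookrightarrow \mathscr{C}oh_n$, reducing S-completeness of $\mathscr{U}$ to axis-completeness of this open immersion. Unwinding Definition \ref{defn:axis-comp} and using the Rees dictionary again, the two ``axes'' in the axis-completeness test correspond to a pair of opposite filtrations $\mathcal{E}^\bullet, \mathcal{E}'_\bullet$ with $\mathcal{E},\mathcal{E}' \in \mathscr{U}(\kappa)$, and the central point where the axes meet corresponds to their common associated graded $\mathrm{gr}(\mathcal{E}^\bullet) = \mathrm{gr}(\mathcal{E}'_\bullet)$. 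The axis-completeness requirement that this central point land in $\mathscr{U}$ is precisely the stated condition.

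The bulk of the work here is thus bookkeeping rather than mathematics: the genuine content is carried by the two general theorems, and what remains is to translate the abstract notions (opposite filtrations, axis-completeness) into concrete statements about coherent sheaves via the Rees construction. The main potential obstacle is checking that the Rees dictionary between $[\mathbf{A}^1/\mathbf{G}_m]$-morphisms and filtrations interacts correctly with both the ``opposite'' test and the ``axis'' test; but since this compatibility is exactly what is set up in Example \ref{eg:opp-in-Coh} and the surrounding discussion, the corollary should follow by direct substitution.
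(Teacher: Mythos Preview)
Your proposal is correct and follows essentially the same approach as the paper: verify the hypotheses of Theorems \ref{thm:equivalence-non-opp} and \ref{thm:open-S-comp} for $\mathscr{C}oh_n$, then translate opposite filtrations and axis-completeness into concrete sheaf-theoretic conditions via the Rees construction (Example \ref{eg:opp-in-Coh}). The only organizational difference is that the paper isolates the translation of axis-completeness as a separate statement (Lemma \ref{lem:open-axis-complete}), whereas you inline it.
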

\subsection*{Notation}
Let $C$ be a smooth projective connected curve over an algebraically closed field $k$. Denote by $\mathscr{C}oh_n$ (resp., $\mathscr{B}un_n$) the stack of rank $n$ coherent sheaves (resp., vector bundles) over $C$.
\subsection*{Acknowledgements}
I am grateful to Jianping Wang, Xueqing Wen for helpful discussions, and Dario Wei\ss mann for giving detailed comments on earlier versions of the paper.
\section{Preliminaries}
\subsection{S-equivalence}
As we have already seen, morphisms from $[\mathbf{A}^1/\mathbf{G}_m]$ are of great importance and we record it into a definition.
\begin{defn}[Filtration, \cite{DHL2014} or \cite{MR3758902}]
Let $\mathscr{X}$ be an algebraic stack over a field $k$ and $x \in \mathscr{X}(\kappa)$ be a geometric point. A \emph{filtration} of $x$ is a morphism $f: [\mathbf{A}^1_{\kappa}/\mathbf{G}_{m,\kappa}] \to \mathscr{X}$ such that $f(1) \cong x$. It is called \emph{non-trivial} if $f(0) \ncong x$.
For any filtration $f: [\mathbf{A}^1_{\kappa}/\mathbf{G}_{m,\kappa}] \to \mathscr{X}$ of $x$, the point $f(0) \in \mathscr{X}(\kappa)$ is called the \emph{associated graded point} of the filtration $f$. 
\end{defn}
\begin{example}\label{eg:filtration}
In several cases filtrations can be explicitly written down.
\begin{enumerate}
\item 
(Rees construction, \cite[Lemma 1.10]{MR3758902}) Let $\mathscr{U} \subseteq \mathscr{C}oh_n$ be an open substack. Then a morphism $[\mathbf{A}^1/\mathbf{G}_{m}] \to \mathscr{U}$ is the same as a data of a rank $n$ coherent sheaf $\mathcal{E}$ over $C$ together with a finite $\mathbf{Z}$-graded filtration $\mathcal{E}^\bullet$ such that the associated graded sheaf $\mathrm{gr}(\mathcal{E}^\bullet) \in \mathscr{U}$.
\item
(Hilbert-Mumford criterion, \cite[Lemma 1.6]{MR3758902}) Let $\mathscr{X}=[X/G]$ be the quotient stack for an action of a smooth affine algebraic group $G$ on a quasi-separated algebraic space $X$ of finite type, both defined over a field $k$. Then a morphism $[\mathbf{A}^1/\mathbf{G}_{m}] \to \mathscr{X}$ is the same as a data of a point $x \in X$ and a cocharacter $\lambda: \mathbf{G}_m \to G$ such that $\lim_{t \to 0} \lambda(t).x$ exists.
\end{enumerate}
\end{example}
Recall that in order to define the S-equivalence of semistable vector bundles we first single out a distinguished class of filtrations, called Jordan-H\"{o}lder filtrations, and then identify those bundles with the same associated graded sheaves. Using the Rees construction this can be interpreted as identifying those points with ``special'' filtrations such that the images of $0$ are the same. For arbitrary stacks a priori it is not clear which kind of filtrations should be special. Nevertheless we can capture the underlying idea of S-equivalence and use filtrations alone. To distinguish it from the usual S-equivalence we refer to it as ``quasi-S-equivalence''.
\begin{defn}[Quasi-S-equivalence]\label{def:quasi-S}
Let $\mathscr{X}$ be an algebraic stack over a field $k$. Two geometric points $x_1,x_2 \in \mathscr{X}(\kappa)$ are called \emph{quasi-S-equivalent} in $\mathscr{X}$ if there exist filtrations $f_1,f_2$ of $x_1,x_2$ respectively such that $f_1(0)=f_2(0)$.
\end{defn}
The first observation is that any morphism to algebraic spaces must be constant on quasi-S-equivalent points.
\begin{lem}\label{lem:const-on-qS}
Let $\mathscr{X}$ be an algebraic stack over a field $k$ and $x_1,x_2 \in \mathscr{X}(\kappa)$ be two quasi-S-equivalent geometric points. Then they have the same image under any morphism $f: \mathscr{X} \to W$ to an algebraic space $W$.
\end{lem}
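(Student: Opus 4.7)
The plan is to reduce the claim to a single observation: any morphism $g: [\mathbf{A}^1_\kappa/\mathbf{G}_{m,\kappa}] \to W$ to an algebraic space $W$ sends the two $\kappa$-points $0$ and $1$ to the same element of $W(\kappa)$. Granting this, given filtrations $f_1, f_2$ witnessing the quasi-S-equivalence of $x_1, x_2$ with $f_1(0) \cong f_2(0)$ and $f_i(1) \cong x_i$, composing with $f$ yields two morphisms $g_i := f \circ f_i: [\mathbf{A}^1_\kappa/\mathbf{G}_{m,\kappa}] \to W$ to which the observation applies, giving
\[
f(x_1) = g_1(1) = g_1(0) = f(f_1(0)) = f(f_2(0)) = g_2(0) = g_2(1) = f(x_2),
\]
which is exactly the conclusion.

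To establish the observation, I would invoke that the structure morphism $\pi: [\mathbf{A}^1_\kappa/\mathbf{G}_{m,\kappa}] \to \mathrm{Spec}(\kappa)$ is a good moduli space: the group $\mathbf{G}_{m,\kappa}$ is linearly reductive and $\kappa[t]^{\mathbf{G}_m} = \kappa$. By Alper's universality result, any morphism from the source of a good moduli space to an algebraic space factors uniquely through the good moduli space, so $g$ factors through $\pi$ and in particular collapses both $\kappa$-points of $[\mathbf{A}^1_\kappa/\mathbf{G}_{m,\kappa}]$ to a single point of $W$. Alternatively, and more elementarily, one can pull back along the smooth atlas $\mathbf{A}^1_\kappa \to [\mathbf{A}^1_\kappa/\mathbf{G}_{m,\kappa}]$ to obtain a $\mathbf{G}_m$-invariant morphism $\tilde{g}: \mathbf{A}^1_\kappa \to W$; working étale-locally on $W$ (which one can do since the image factors through any neighborhood of $\tilde{g}(1)$ containing $\tilde{g}(0)$) this corresponds to a ring map $A \to \kappa[t]$ whose image is fixed by $\mathbf{G}_m$ and therefore lies in $\kappa[t]^{\mathbf{G}_m} = \kappa$, so $\tilde{g}$ is constant.

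There is no substantial obstacle; the entire content of the lemma is this good-moduli-space property of $[\mathbf{A}^1/\mathbf{G}_m]$. The only mild subtlety is invoking the universality of good moduli spaces in the correct form, which is standard. Conceptually this lemma is the trivial direction ``quasi-S-equivalent $\Rightarrow$ identified in the moduli space'' of Theorem \ref{thm:in-equivalence}, and it serves as the natural starting point for the nontrivial converse established later.
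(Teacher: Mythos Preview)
Your proof is correct and follows exactly the same approach as the paper: both reduce to the fact that any map $[\mathbf{A}^1_\kappa/\mathbf{G}_{m,\kappa}] \to W$ factors through the good moduli space $\mathrm{Spec}(\kappa)$, and then run the identical chain of equalities $f(x_1)=f\circ f_1(1)=f\circ f_1(0)=f\circ f_2(0)=f\circ f_2(1)=f(x_2)$. Your additional elementary argument via the atlas and $\mathbf{G}_m$-invariants is a nice bonus the paper omits.
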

The converse in \Cref{lem:const-on-qS} is not true as all geometric points, quasi-S-equivalent or not, have the same image under the structure morphism $\mathscr{X} \to \mathrm{Spec}(k)$.
\begin{proof}
Let $f_1,f_2$ be filtrations of $x_1,x_2$ respectively such that $f_1(0)=f_2(0)$. Then the composition $[\mathbf{A}^1_{\kappa}/\mathbf{G}_{m,\kappa}] \xrightarrow{f_i} \mathscr{X} \xrightarrow{f} W$ is constant as it factors uniquely through the good moduli space $[\mathbf{A}^1_\kappa/\mathbf{G}_{m,\kappa}] \to \mathrm{Spec}(\kappa)$, so
\[
f(x_1)=f \circ f_1(1)=f \circ f_1(0)=f \circ f_2(0)=f \circ f_2(1)=f(x_2).
\]
\end{proof}
Example \ref{eg:filtration} (1) then implies
\begin{lem}\label{lem:S-in-Coh}
Let $\mathscr{U} \subseteq \mathscr{C}oh_n$ be an open substack. Two geometric points $\mathcal{E}_1,\mathcal{E}_2 \in \mathscr{U}(\kappa)$ are quasi-S-equivalent if and only if there exist filtrations $\mathcal{E}_1^\bullet,\mathcal{E}_2^\bullet$ of $\mathcal{E}_1,\mathcal{E}_2$ respectively such that $\mathrm{gr}(\mathcal{E}_1^\bullet)=\mathrm{gr}(\mathcal{E}_2^\bullet) \in \mathscr{U}(\kappa)$. In particular, quasi-S-equivalent points in $\mathscr{U}$ have the same determinant.
\end{lem}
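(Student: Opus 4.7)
The plan is to simply unwind the definition of quasi-S-equivalence (Definition \ref{def:quasi-S}) using the Rees-type dictionary recorded in Example \ref{eg:filtration}(1). That dictionary provides an equivalence between morphisms $f:[\mathbf{A}^1_\kappa/\mathbf{G}_{m,\kappa}] \to \mathscr{U}$ and pairs $(\mathcal{E},\mathcal{E}^\bullet)$ consisting of a rank $n$ coherent sheaf $\mathcal{E}$ over $C_\kappa$ together with a finite $\mathbf{Z}$-graded filtration $\mathcal{E}^\bullet$ whose associated graded lies in $\mathscr{U}$. The key point I would invoke is that under this dictionary the evaluations at the two standard $\kappa$-points of $[\mathbf{A}^1_\kappa/\mathbf{G}_{m,\kappa}]$ are given by $f(1) \cong \mathcal{E}$ (restriction to the open stratum $\mathrm{Spec}(\kappa) = [\mathbf{G}_{m,\kappa}/\mathbf{G}_{m,\kappa}]$) and $f(0) \cong \mathrm{gr}(\mathcal{E}^\bullet)$ (restriction to the closed stratum $B\mathbf{G}_{m,\kappa}$, which remembers the grading).

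With this in hand, given filtrations $f_1,f_2$ of $\mathcal{E}_1,\mathcal{E}_2$, I would translate them into $\mathbf{Z}$-graded filtrations $\mathcal{E}_1^\bullet,\mathcal{E}_2^\bullet$ of the same underlying sheaves, and observe that the condition $f_1(0)=f_2(0)$ in $\mathscr{U}(\kappa)$ is literally the condition $\mathrm{gr}(\mathcal{E}_1^\bullet)=\mathrm{gr}(\mathcal{E}_2^\bullet)$ in $\mathscr{U}(\kappa)$. Since the dictionary is a bijective correspondence, this gives both implications in one stroke.

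For the ``in particular'' part, I would apply Lemma \ref{lem:const-on-qS} to the determinant morphism $\det:\mathscr{C}oh_n \to \mathrm{Pic}(C)$, which targets an algebraic space; quasi-S-equivalent sheaves must have the same image, that is, isomorphic determinant line bundles. Alternatively, one can bypass the reference and argue directly: determinants are additive on short exact sequences of coherent sheaves on a curve, so $\det(\mathcal{E}_i) \cong \det(\mathrm{gr}(\mathcal{E}_i^\bullet))$ for $i=1,2$, and the two right-hand sides agree by the equivalence just proved. I do not foresee any real obstacle here; the entire argument is a translation of definitions via the Rees construction, which is precisely why the author writes ``Example \ref{eg:filtration}(1) then implies'' immediately before stating the lemma.
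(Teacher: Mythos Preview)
Your proposal is correct and matches the paper's approach exactly: the paper gives no proof beyond the sentence ``Example \ref{eg:filtration}(1) then implies'', and you have simply spelled out that implication. One small caution on the ``in particular'' clause: if $\mathrm{Pic}$ denotes the Picard \emph{stack} (as the paper seems to use it elsewhere), it is not an algebraic space, so Lemma \ref{lem:const-on-qS} does not apply directly; your alternative argument via additivity of determinants on short exact sequences is the cleaner route and avoids this issue.
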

For quasi-S-equivalent vector bundles, there is a simple characterization.
\begin{lem}\label{lem:S-in-Bun}
Two geometric points $\mathcal{E}_1,\mathcal{E}_2 \in \mathscr{B}un_n(\kappa)$ are quasi-S-equivalent if and only if $\det(\mathcal{E}_1)=\det(\mathcal{E}_2)$. In particular
\begin{enumerate}
\item 
Quasi-S-equivalence is an equivalence relation on points of $\mathscr{B}un_n$.
\item 
The determinant morphism $\det: \mathscr{B}un_n \to \mathrm{Pic}$ is a categorical moduli space, i.e., any morphism $\mathscr{B}un_n \to W$ to an algebraic space $W$ factors uniquely through the morphism $\det$.
\end{enumerate}
\end{lem}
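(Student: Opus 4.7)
The plan is to prove the equivalence ``$\mathcal{E}_1, \mathcal{E}_2$ are quasi-S-equivalent if and only if $\det(\mathcal{E}_1) = \det(\mathcal{E}_2)$'' first, and then to deduce (1) and (2) as formal consequences.

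The forward direction is immediate from Lemma \ref{lem:const-on-qS}, applied to the determinant morphism $\det: \mathscr{B}un_n \to \mathrm{Pic}$, whose target is the Picard scheme of $C$ (an algebraic space).

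For the converse, given $\mathcal{E}_1, \mathcal{E}_2$ with common determinant $L$, I would exhibit filtrations of each with a common associated graded of the shape
\[
\mathcal{O}_C(-N_1) \oplus \cdots \oplus \mathcal{O}_C(-N_{n-1}) \oplus L(N_1 + \cdots + N_{n-1}),
\]
for suitable integers $N_1, \ldots, N_{n-1} \gg 0$. The key geometric input is that for any rank $r \geq 2$ vector bundle $\mathcal{F}$ on $C$ and any sufficiently large $N$, the twist $\mathcal{F}(N)$ is globally generated and a generic global section is nowhere vanishing (sections vanishing at a given point form a codimension-$r$ subspace of $H^0(\mathcal{F}(N))$, and $C$ is one-dimensional, so their union over points of $C$ is a proper subvariety), producing a line subbundle $\mathcal{O}_C(-N) \hookrightarrow \mathcal{F}$. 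Choosing $N_1$ large enough to work for both $\mathcal{E}_1$ and $\mathcal{E}_2$ simultaneously, iterating on the resulting rank-$(n-1)$ quotients with a common $N_2$, and so on, ends at rank $1$ where the remaining line bundle is forced by the determinant to be $L(N_1 + \cdots + N_{n-1})$ for both. Lemma \ref{lem:S-in-Coh} then delivers the desired quasi-S-equivalence; the case $n = 1$ is trivial since line bundles admit only trivial filtrations.

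Part (1) is then immediate, since equality of determinants is visibly an equivalence relation. For part (2), my plan is to choose a section $s: \mathrm{Pic} \to \mathscr{B}un_n$ of $\det$ sending $L \mapsto L \oplus \mathcal{O}_C^{n-1}$ (which is a genuine morphism because $C$ has a $k$-rational point, yielding a Poincar\'e line bundle on $C \times \mathrm{Pic}$ rigidified at that point) and to set $h := g \circ s$. The equality $g = h \circ \det$ holds on geometric points by the characterization combined with Lemma \ref{lem:const-on-qS}. I expect the main technical obstacle to be upgrading this pointwise agreement to an equality of morphisms $\mathscr{B}un_n \to W$. To do so I plan to construct, over any test scheme $T$ with a family $\mathcal{E}$, an \'etale-local relative Rees filtration interpolating $\mathcal{E}$ and $\det(\mathcal{E}) \oplus \mathcal{O}^{n-1}$, and then invoke the universal property of $[\mathbf{A}^1/\mathbf{G}_m] \to \mathrm{Spec}(k)$ as a good moduli space to conclude that $g$ takes the same value on both families.
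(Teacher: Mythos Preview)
Your argument is correct. Two points of comparison with the paper:

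For the ``if'' direction the paper works in one step, choosing $m \gg 0$ with both $\mathcal{E}_i(m)$ globally generated and quoting that any globally generated bundle sits in a sequence $0 \to \mathcal{O}_C^{\oplus n-1} \to \mathcal{E}_i(m) \to \det(\mathcal{E}_i(m)) \to 0$; untwisting gives a common two-term associated graded $\mathcal{O}_C(-m)^{\oplus n-1} \oplus \det(\mathcal{E}_i)((n-1)m)$. Your iterative peeling of line subbundles $\mathcal{O}_C(-N_j)$ is a self-contained substitute for that citation and reaches the same conclusion.

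For part (2), the paper also takes the section $s(L) = L \oplus \mathcal{O}_C^{\oplus n-1}$ and sets $h = f \circ s$, but then simply asserts that ``$f$ is constant on each fiber of $\det$ \ldots\ thus factors through $f \circ s$''. You are right to flag that constancy on geometric fibers does not by itself yield an equality of morphisms to an arbitrary algebraic space $W$; the paper glosses over this. Your plan is the correct fix, with one clarification: a single Rees filtration of $\mathcal{E}$ will not in general have associated graded equal to $\det(\mathcal{E}) \oplus \mathcal{O}_C^{\oplus n-1}$, so ``interpolating'' should mean two relative degenerations---one from $\mathcal{E}$, one from $\det(\mathcal{E}) \oplus \mathcal{O}_C^{\oplus n-1}$---to a common graded (your $\bigoplus_j \mathcal{O}_C(-N_j) \oplus L(\sum_j N_j)$ with the $N_j$ chosen large enough to serve both families over a given quasi-compact $T$). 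Each degeneration, read as $T \times [\mathbf{A}^1/\mathbf{G}_m] \to \mathscr{B}un_n$, forces equality of $g$-values at the two ends via the good moduli space $T \times [\mathbf{A}^1/\mathbf{G}_m] \to T$, and \'etale-local agreement of $g$ with $h \circ \det$ then suffices.
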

\begin{proof}
The only if part is proved in Lemma \ref{lem:S-in-Coh}. For the if part, choose an integer $m \gg 0$ such that both $\mathcal{E}_1(m)$ and $\mathcal{E}_2(m)$ are globally generated. Then \cite[Proposition 2.6 (ix)]{MR3013028} gives a short exact sequence
\begin{gather*}
0 \to \mathcal{O}_C^{\oplus n-1} \to \mathcal{E}_i(m) \to \det(\mathcal{E}_i(m)) \to 0, \text{ i.e.,} \\
0 \to \mathcal{O}_C^{\oplus n-1}(-m) \to \mathcal{E}_i \to \det(\mathcal{E}_i(m))(-m) \to 0.
\end{gather*}
Since $\det(\mathcal{E}_i(m))=\det(\mathcal{E}_i)(nm)$, we are done by Lemma \ref{lem:S-in-Coh}. In particular, the part (1) is clear. 

By Lemma \ref{lem:const-on-qS}, any morphism $f: \mathscr{B}un_n \to W$ to an algebraic space $W$ is constant on each fiber of $\det$. Since $\det$ is surjective, such factorization $\mathrm{Pic} \to W$ is necessarily unique. To show the existence, note that the morphism $\det: \mathscr{B}un_n \to \mathrm{Pic}$ has a section $s: \mathrm{Pic} \to \mathscr{B}un_n$ given by $\mathcal{L} \mapsto \mathcal{L} \oplus \mathcal{O}_C^{\oplus n-1}$, any morphism $f: \mathscr{B}un_n \to W$ thus factors through $f \circ s: \mathrm{Pic} \to W$. This proves the part (2).
\end{proof}
For semistable vector bundles, the notion of quasi-S-equivalence recovers that of S-equivalence.
\begin{lem}\label{lem:qS-recover}
Two geometric points $\mathcal{E}_1,\mathcal{E}_2 \in \mathscr{B}un_n^{ss}(\kappa)$ are quasi-S-equivalent if and only if they are S-equivalent.
\end{lem}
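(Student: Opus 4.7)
The plan is to apply Lemma \ref{lem:S-in-Coh} to the open substack $\mathscr{B}un_n^{ss} \subseteq \mathscr{C}oh_n$, so that the statement ``$\mathcal{E}_1, \mathcal{E}_2$ are quasi-S-equivalent'' becomes: there exist filtrations $\mathcal{E}_i^\bullet$ with $\mathrm{gr}(\mathcal{E}_1^\bullet) = \mathrm{gr}(\mathcal{E}_2^\bullet) \in \mathscr{B}un_n^{ss}(\kappa)$. The task then reduces to comparing this condition with the classical one: $\mathcal{E}_1, \mathcal{E}_2$ have isomorphic Jordan-H\"older polystable associated graded.

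For the forward direction (S-equivalent $\Rightarrow$ quasi-S-equivalent), let $\mathcal{P}$ be the common polystable bundle of $\mathcal{E}_1, \mathcal{E}_2$. A Jordan-H\"older filtration of each $\mathcal{E}_i$ has associated graded $\mathcal{P}$, which is polystable and in particular lies in $\mathscr{B}un_n^{ss}(\kappa)$. By the Rees construction (Example \ref{eg:filtration} (1)), this data defines filtrations of $\mathcal{E}_1, \mathcal{E}_2$ in $\mathscr{B}un_n^{ss}$ with equal associated graded, witnessing quasi-S-equivalence.

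For the reverse direction (quasi-S-equivalent $\Rightarrow$ S-equivalent), take filtrations $\mathcal{E}_i^\bullet$ with $\mathrm{gr}(\mathcal{E}_i^\bullet) = \mathcal{G}$ for some common $\mathcal{G} \in \mathscr{B}un_n^{ss}(\kappa)$. Since the Rees construction preserves rank and degree, $\mu(\mathcal{G}) = \mu(\mathcal{E}_i)$. Each subquotient $V_j := \mathcal{E}_i^j/\mathcal{E}_i^{j-1}$ is simultaneously a sub- and a quotient bundle of the semistable $\mathcal{G}$ (via the splitting $\mathcal{G} = \bigoplus_j V_j$), so $\mu(V_j) = \mu(\mathcal{G})$ and every sub-bundle of $V_j$ is a sub-bundle of $\mathcal{G}$ of slope at most $\mu(\mathcal{G})$; hence $V_j$ is semistable of the correct slope. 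Concatenating Jordan-H\"older filtrations of the $V_j$'s refines $\mathcal{E}_i^\bullet$ to a Jordan-H\"older filtration of $\mathcal{E}_i$ whose polystable associated graded equals $\bigoplus_j \mathrm{gr}^{JH}(V_j) = \mathrm{gr}^{JH}(\mathcal{G})$. Thus both $\mathcal{E}_1$ and $\mathcal{E}_2$ are S-equivalent to $\mathcal{G}$, hence to each other.

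The main delicate step is the ``only if'' direction, and within it the observation that the graded pieces of the abstract filtration produced by the Rees construction are automatically semistable of slope $\mu(\mathcal{E}_i)$; once this is in hand, refining to Jordan-H\"older and comparing polystable associated gradeds is routine.
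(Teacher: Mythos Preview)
Your proof is correct and follows essentially the same route as the paper: translate quasi-S-equivalence via Lemma~\ref{lem:S-in-Coh} into filtrations with common semistable associated graded, observe that the graded pieces are semistable of the same slope, and refine to Jordan--H\"older filtrations whose associated graded depends only on $\mathrm{gr}(\mathcal{E}_i^\bullet)$. The paper packages the refinement step into a separate auxiliary lemma (with an explicit Claim about pulling back sub-filtrations through a quotient), whereas you argue it directly via the direct-summand decomposition of $\mathcal{G}$; both are fine and the underlying idea is identical.
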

\begin{proof}
This is proved using Lemma \ref{lem:S-in-Coh}. By definition, S-equivalence implies quasi-S-equivalence. Conversely, if $\mathcal{E}_1,\mathcal{E}_2 \in \mathscr{B}un_n^{ss}(\kappa)$ are two quasi-S-equivalent geometric points, then there exist filtrations $\mathcal{E}_1^\bullet,\mathcal{E}_2^\bullet$ of $\mathcal{E}_1,\mathcal{E}_2$ respectively such that $\mathrm{gr}(\mathcal{E}_1^\bullet)=\mathrm{gr}(\mathcal{E}_2^\bullet) \in \mathscr{B}un_n^{ss}(\kappa)$. The filtration $\mathcal{E}_i^\bullet$ can be refined to be a Jordan-H\"{o}lder filtration $\mathcal{E}_i^\text{JH}$ of $\mathcal{E}_i$ and $\mathrm{gr}(\mathcal{E}_1^\text{JH})=\mathrm{gr}(\mathcal{E}_2^\text{JH})$ by the following lemma. This means $\mathcal{E}_1,\mathcal{E}_2$ are S-equivalent.
\end{proof}
\begin{lem}
Let $\mathcal{E}^\bullet$ be a filtration of a semistable vector bundle $\mathcal{E}$ such that $\mathrm{gr}(\mathcal{E}^\bullet)$ is also semistable. Then the filtration $\mathcal{E}^\bullet$ can be refined to be a Jordan-H\"{o}lder filtration $\mathcal{E}^\mathrm{JH}$ of $\mathcal{E}$. Furthermore, $\mathrm{gr}(\mathcal{E}^\mathrm{JH})$ depends only on $\mathrm{gr}(\mathcal{E}^\bullet)$.
\end{lem}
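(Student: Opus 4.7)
The plan is to build the refined filtration one step at a time. Since $\mathrm{gr}(\mathcal{E}^\bullet)=\bigoplus_i \mathcal{E}^i/\mathcal{E}^{i+1}$ is semistable, each graded piece $\mathcal{E}^i/\mathcal{E}^{i+1}$, being a direct summand, is itself semistable and has the same slope as $\mathcal{E}$. So each graded piece admits a Jordan-H\"older filtration whose successive quotients are stable of slope $\mu(\mathcal{E})$.

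Next I would lift these Jordan-H\"older filtrations back to $\mathcal{E}$. Concretely, for each $i$ pull back a JH filtration $0=F^0_i\subseteq F^1_i\subseteq\cdots\subseteq F^{k_i}_i=\mathcal{E}^i/\mathcal{E}^{i+1}$ along the quotient map $\mathcal{E}^i\to\mathcal{E}^i/\mathcal{E}^{i+1}$ to get a chain $\mathcal{E}^{i+1}=\widetilde{F}^0_i\subseteq\widetilde{F}^1_i\subseteq\cdots\subseteq\widetilde{F}^{k_i}_i=\mathcal{E}^i$ whose successive quotients coincide with those of $F^\bullet_i$, hence are stable of slope $\mu(\mathcal{E})$. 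Concatenating these refinements over all $i$ produces a refinement $\mathcal{E}^{\mathrm{JH}}$ of $\mathcal{E}^\bullet$ whose successive quotients are all stable of the same slope, so this is by definition a Jordan-H\"older filtration of $\mathcal{E}$.

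For the uniqueness statement, observe that by construction
\[
\mathrm{gr}(\mathcal{E}^{\mathrm{JH}})=\bigoplus_i \mathrm{gr}(F^\bullet_i),
\]
and by the classical Jordan-H\"older theorem each $\mathrm{gr}(F^\bullet_i)$ depends, up to isomorphism, only on $\mathcal{E}^i/\mathcal{E}^{i+1}$. Therefore the whole associated graded depends only on the collection $\{\mathcal{E}^i/\mathcal{E}^{i+1}\}_i$, i.e.\ on $\mathrm{gr}(\mathcal{E}^\bullet)$.

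The only subtle point is verifying that each $\mathcal{E}^i/\mathcal{E}^{i+1}$ is semistable of slope $\mu(\mathcal{E})$; everything else is a direct application of the classical theory of Jordan-H\"older filtrations for semistable bundles, so I do not anticipate any real obstacle. I would carry this out at the beginning by noting that semistability of a direct sum forces semistability of each summand and equality of slopes, which reduces the refinement problem to the classical one piece by piece.
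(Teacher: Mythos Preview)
Your proposal is correct and follows essentially the same approach as the paper: both arguments first observe that semistability of $\mathrm{gr}(\mathcal{E}^\bullet)$ forces each successive quotient to be semistable of slope $\mu(\mathcal{E})$, then refine by pulling back a Jordan--H\"older filtration of each quotient along the projection (the paper packages this pullback step as a standalone claim about preimages of subsheaves), and finally invoke uniqueness of the stable constituents. The only cosmetic difference is the indexing convention for the filtration.
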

\begin{proof}
Since the associated graded bundle $\mathrm{gr}(\mathcal{E}^\bullet)$ is semistable, each successive quotient $\mathcal{E}^i/\mathcal{E}^{i-1}$ is semistable of the same slope. By induction we reduce to the following easy claim and the fact that stable components in a Jordan-H\"{o}lder filtration of any semistable vector bundle are unique.

\vspace{0.3em}

\noindent \textsc{Claim.}
Let $\mathcal{G}_1 \subseteq \mathcal{G}_2$ and $\mathcal{F}_1 \subseteq \mathcal{G}_2/\mathcal{G}_1$ be subsheaves. Then there exists a unique intermediate sheaf $\mathcal{G}_1 \subseteq \mathcal{F} \subseteq \mathcal{G}_2$ such that
\[
\mathcal{F}/\mathcal{G}_1 \cong \mathcal{F}_1 \text{ and } \mathcal{G}_2/\mathcal{F} \cong (\mathcal{G}_2/\mathcal{G}_1)/\mathcal{F}_1.
\]
Indeed, the sheaf $\mathcal{F} \subseteq \mathcal{G}_2$ is the preimage of the subsheaf $\mathcal{F}_1 \subseteq \mathcal{G}_2/\mathcal{G}_1$ under the quotient map $\mathcal{G}_2 \to \mathcal{G}_2/\mathcal{G}_1$. 
\end{proof}
Example \ref{eg:filtration} (2) then implies
\begin{lem}\label{lem:S-in-quotient}
Let $\mathscr{X}=[X/G]$ be the quotient stack for an action of a smooth affine algebraic group $G$ on a quasi-separated algebraic space $X$ of finite type, both defined over a field $k$. Two geometric points $\overline{x}_1,\overline{x}_2 \in \mathscr{X}(\kappa)$ are quasi-S-equivalent if and only if $\overline{G.x_1} \cap \overline{G.x_2} \neq \emptyset$, where $x_1,x_2 \in X(\kappa)$ are some lifts of $\overline{x}_1,\overline{x}_2 \in \mathscr{X}(\kappa)$ respectively.
\end{lem}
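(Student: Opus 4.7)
The plan is to directly apply the Hilbert-Mumford-style dictionary from Example \ref{eg:filtration}(2), under which a filtration $f: [\mathbf{A}^1_\kappa/\mathbf{G}_{m,\kappa}] \to [X/G]$ corresponds to a pair $(y,\lambda)$ of a $\kappa$-point $y \in X$ and a cocharacter $\lambda: \mathbf{G}_{m,\kappa} \to G_\kappa$ for which $\lim_{t \to 0} \lambda(t).y$ exists; under this correspondence $f(1)$ is the image of $y$ in $\mathscr{X}$, and $f(0)$ is the image of the limit point. The whole statement should then fall out by chasing this dictionary and checking when the images of $0$ match.

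For the forward direction, I would start from filtrations $f_1,f_2$ witnessing the quasi-S-equivalence of $\overline{x}_1,\overline{x}_2$ and translate each into a pair $(y_i,\lambda_i)$. Since $f_i(1) \cong \overline{x}_i$, the lift $y_i$ lies in $G.x_i$, so the limit $z_i := \lim_{t \to 0} \lambda_i(t).y_i$ belongs to $\overline{G.y_i} = \overline{G.x_i}$. The equality $f_1(0)=f_2(0)$ in $\mathscr{X}(\kappa)$ then forces $z_1,z_2$ to sit in a common $G$-orbit, which is therefore contained in $\overline{G.x_1}\cap\overline{G.x_2}$, yielding non-emptiness.

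For the reverse direction, starting from $z \in \overline{G.x_1}\cap\overline{G.x_2}$, I would need to produce, for each $i$, a lift $y_i \in G.x_i$ and a cocharacter $\lambda_i$ such that $\lim_{t \to 0} \lambda_i(t).y_i \in G.z$; the corresponding filtrations $f_i$ provided by Example \ref{eg:filtration}(2) then automatically satisfy $f_i(1)\cong\overline{x}_i$ and $f_1(0)=f_2(0)$, so $\overline{x}_1,\overline{x}_2$ are quasi-S-equivalent. This is the step I expect to be the main obstacle, since it is where genuine input beyond the formal dictionary enters: the existence of one-parameter degenerations reaching a prescribed point of an orbit closure is exactly the content of the Hilbert-Mumford-Birkes-Kempf reachability theorem, which needs $G$ to be reductive. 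This hypothesis is effectively in force in the ``locally reductive'' setting governing the rest of the paper, and the argument is completed by extracting the required cocharacters from this reachability result and feeding them back through the dictionary.
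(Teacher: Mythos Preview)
Your approach---unpacking both directions through the Hilbert--Mumford dictionary of Example~\ref{eg:filtration}(2)---is exactly the paper's; the paper in fact offers nothing beyond the sentence ``Example~\ref{eg:filtration}(2) then implies'' and leaves the verification to the reader. Your forward direction is complete and matches what the dictionary gives for free.

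Where there is a genuine gap is in your closing paragraph on the converse. You have correctly diagnosed that producing cocharacters from a point $z\in\overline{G.x_1}\cap\overline{G.x_2}$ is a Hilbert--Mumford--Kempf reachability statement and that this needs $G$ reductive, but your resolution---that reductivity is ``effectively in force in the locally reductive setting''---does not close the argument. Local reductivity is a hypothesis imposed elsewhere in the paper, not in this lemma, and as written the lemma allows an arbitrary smooth affine $G$. In that generality the converse actually fails: let $G=\mathbf{G}_a$ act on $\mathbf{P}^1$ by translation; the open orbit has $\infty$ in its closure, so $\overline{G.[0:1]}\cap\overline{G.[1:0]}\neq\emptyset$, yet $\mathrm{Hom}(\mathbf{G}_m,\mathbf{G}_a)=0$ forces every filtration in $[\mathbf{P}^1/\mathbf{G}_a]$ to be trivial and the two points are not quasi-S-equivalent. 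Even granting $G$ reductive, one further step is missing from your sketch: Kempf's theorem drives $x_i$ to the \emph{unique closed} orbit in $\overline{G.x_i}$, not to an arbitrary prescribed $G.z$ in the intersection, so you must also argue that the two closures share a common closed orbit---automatic when $X$ is affine via the invariant-theory quotient, but not for general $X$. In the paper the lemma is only ever invoked with $G=\mathbf{G}_m$, where everything can be checked by hand; your instinct that the converse hides real content is correct, and that content is supplied neither by the dictionary alone nor by the phrase ``locally reductive.''
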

\begin{rmk}
There are some comments on quasi-S-equivalence.
\begin{itemize}
\item 
This is not an interesting notion for points of algebraic spaces.

In this case any morphism from $[\mathbf{A}^1/\mathbf{G}_m]$ factors through the good moduli space $[\mathbf{A}^1/\mathbf{G}_m] \to \mathrm{Spec}(k)$ so any filtration must be trivial.
\item 
In general, quasi-S-equivalence is not an equivalence relation on points of an algebraic stack. 

It is reflexive, symmetric but not necessarily transitive. For example in the quotient stack $[\mathbf{P}^1/\mathbf{G}_m]$, the point $1$ is quasi-S-equivalent to both $0$ and $\infty$, but $0$ and $\infty$ are not quasi-S-equivalent, by Lemma \ref{lem:S-in-quotient}.

However, if an algebraic stack admits an adequate moduli space, then quasi-S-equivalence is an equivalence relation on points of the stack, see Theorem \ref{thm:equivalence}. The converse is not always true, e.g. quasi-S-equivalence is an equivalence relation on points of $\mathscr{B}un_n$ (see Lemma \ref{lem:S-in-Bun}) but $\mathscr{B}un_n$ does not admit an adequate moduli space.
\item
Quasi-S-equivalence depends on the ambient stack. 

Indeed, let $\mathscr{U} \subseteq \mathscr{X}$ be a substack and $x_1,x_2 \in \mathscr{U}(\kappa)$ be two geometric points. If $x_1,x_2$ are quasi-S-equivalent in $\mathscr{U}$, then they are quasi-S-equivalent in $\mathscr{X}$. The converse is not always true. For example, all geometric points of $[\mathbf{A}^2/\mathbf{G}_m]$, where $\mathbf{G}_m$ acts on $\mathbf{A}^2$ with weight $1,1$, are quasi-S-equivalent by Lemma \ref{lem:S-in-quotient}. 
This is never the case in the open substack $[\mathbf{A}^2-\{0\}/\mathbf{G}_m] \cong \mathbf{P}^1$, where no geometric points are quasi-S-equivalent.
\end{itemize}
\end{rmk}
\subsection{Jordan-H\"{o}lder filtration}
The key feature of Jordan-H\"{o}lder filtrations of semistable vector bundles is that their associated graded bundles are the unique closed point (in the ambient stack of semistable vector bundles) that the original bundles can specialize to. This motivates the following definition of quasi-Jordan-H\"{o}lder filtrations for points of arbitrary algebraic stacks.
\begin{defn}[Quasi-Jordan-H\"{o}lder filtration]\label{defn:JH}
Let $\mathscr{X}$ be an algebraic stack over a field $k$. For any geometric point $x \in \mathscr{X}(\kappa)$, a filtration $f: [\mathbf{A}^1_{\kappa}/\mathbf{G}_{m,\kappa}] \to \mathscr{X}$ of $x$ is said to be \emph{quasi-Jordan-H\"{o}lder} if
\begin{itemize}
\item 
the closure $\overline{\{x\}}$ of $x$ contains a unique closed point, say $x_0 \in \mathscr{X}(\kappa)$.
\item 
$f(0)=x_0$.
\end{itemize}
\end{defn}
\begin{rmk}
A quasi-Jordan-H\"{o}lder filtration of a geometric point
\begin{itemize}
\item 
does not always exist. 

On one hand, the closure of a point may contain several closed points, e.g. the point $1$ in $[\mathbf{P}^1/\mathbf{G}_m]$. On the other hand, unlike schemes, where any point of a locally noetherian scheme specializes to a closed point (see \cite[\href{https://stacks.math.columbia.edu/tag/02IL}{Tag 02IL}]{stacks-project}), even algebraic stacks that are locally of finite type (e.g. $\mathscr{B}un_n$) may have no closed point.
\item 
need not be unique. 

However, the associated graded point is unique. This agrees with the classical situation.
\end{itemize}
\end{rmk}
To ensure the existence of quasi-Jordan-H\"{o}lder filtrations for all points, we put the following two conditions pertaining to the stack in question:
\begin{itemize}
\item 
The stack admits an adequate moduli space. 

This guarantees that the closure of any point contains a unique closed point, see Lemma \ref{lem:unique-closed}. However, if the closure of any point in $\mathscr{X}$ contains a unique closed point, then $\mathscr{X}$ does not necessarily admit an adequate moduli space. For example, let $\mathbf{G}_m$ act on $\mathbf{P}^1$ via $t.[x:y]=[tx:y]$ so $0$ and $\infty$ are the only closed orbits. Let $C$ be the curve obtained from $\mathbf{P}^1$ by glueing $0$ and $\infty$, so any orbit closure in $C$ contains a unique closed orbit, but $C \to \mathrm{Spec}(k)$ is not a good quotient. 

\item 
The stack is locally reductive. 

This guarantees that any specialization to a closed point can be realized as a morphism from $[\mathbf{A}^1/\mathbf{G}_m]$ (see \cite[Lemma 3.24]{MR4665776}).
\end{itemize}
\begin{lem}\label{lem:unique-closed}
Let $\mathscr{X}$ be a locally reductive algebraic stack, locally of finite type over a field $k$. Suppose either
\begin{enumerate}
\item 
The stack $\mathscr{X}$ is $\Theta$-reductive, or
\item
The stack $\mathscr{X}$ admits an adequate moduli space $\phi: \mathscr{X} \to X$.
\end{enumerate}
Then for any geometric point $x \in \mathscr{X}(\kappa)$, there exists a unique closed point $x_0 \in \mathscr{X}(\kappa)$ in its closure.
\end{lem}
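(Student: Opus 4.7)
The plan is to treat the two cases separately. In both, the local reductivity hypothesis by definition requires every point to specialize to a closed one, so existence of a closed point in $\overline{\{x\}}$ comes for free, and the content of the lemma lies in uniqueness.

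\textbf{Case (2).} Suppose $\phi : \mathscr{X} \to X$ is an adequate moduli space and let $x_0, x_0' \in \overline{\{x\}}$ be closed. Since $x_0 \in \overline{\{x\}} \cap \overline{\{x_0\}}$ and similarly for $x_0'$, the standard property that $\phi$ identifies precisely those points whose closures intersect (e.g.\ \cite[Theorem 5.3.1(4)]{MR3272912}) yields $\phi(x_0) = \phi(x) = \phi(x_0')$. I then invoke the further defining property of adequate moduli spaces that each geometric fibre of $\phi$ contains a unique closed point of $\mathscr{X}$ to conclude $x_0 \cong x_0'$.

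\textbf{Case (1).} Suppose $\mathscr{X}$ is $\Theta$-reductive, and let $x_0, x_0' \in \overline{\{x\}}$ be two closed points. By local reductivity combined with \cite[Lemma 3.24]{MR4665776}, there exist filtrations $f, f' : [\mathbf{A}^1_{\kappa}/\mathbf{G}_{m,\kappa}] \to \mathscr{X}$ with $f(1) \cong f'(1) \cong x$ and $f(0) \cong x_0$, $f'(0) \cong x_0'$. The plan is to use $\Theta$-reductivity to amalgamate $f$ and $f'$ into a single filtration over the generic fibre of a suitable DVR $R$, whose specialization to the closed fibre recovers both $f$ and $f'$; the uniqueness of the extension over $\mathrm{Spec}(R)$ provided by $\Theta$-reductivity then forces the two a priori different limits $x_0$ and $x_0'$ to coincide. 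Concretely, one glues two copies of $[\mathbf{A}^1/\mathbf{G}_m]$ along the open point $\mathrm{Spec}(\kappa) \xrightarrow{1} [\mathbf{A}^1/\mathbf{G}_m]$ using the common value $x = f(1) = f'(1)$, and reinterprets the resulting data as a $\Theta_K$-family that degenerates via $\Theta$-reductivity to the desired common $\Theta_R$-extension.

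The main obstacle is case (1): producing the correct generic-fibre $\Theta$-family whose two specializations realize $f$ and $f'$. A safer fall-back route is to combine $\Theta$-reductivity with the local picture coming from local reductivity: \'etale-locally around any closed point $\mathscr{X}$ is of the form $[\mathrm{Spec}(A)/\mathrm{GL}_N]$, and uniqueness of the closed orbit in the closure of any orbit is classical for reductive group actions on affine schemes; $\Theta$-reductivity then plays the role of gluing these local statements into the global uniqueness assertion. Case (2), by contrast, is essentially a direct citation of established properties of adequate moduli spaces and should present no difficulty.
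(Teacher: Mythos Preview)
Your Case~(2) is correct and essentially the paper's own argument, just organized slightly differently: the paper argues by contradiction using \cite[Theorem 5.3.1(4) and (5)]{MR3272912} (distinct closed points have distinct images, while $\phi(\overline{\{x\}})=\phi(x)$), whereas you use the same facts to first place $x_0,x_0'$ in a common fibre and then invoke uniqueness of the closed point there. Either way works.

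For Case~(1), the paper does not give an argument at all: it simply cites \cite[Lemma~3.25]{MR4665776}. Your attempt to reprove this is therefore unnecessary, and as written it is not convincing. $\Theta$-reductivity takes as input a morphism from $\Theta_R \setminus \{0\}$, i.e.\ a filtration over the generic point $K$ together with an extension of the $1$-section over $\mathrm{Spec}(R)$; your ``glue two copies of $[\mathbf{A}^1/\mathbf{G}_m]$ along the open point'' does not produce data of this shape, and it is unclear how one filtration over $K$ could simultaneously specialize to both $f$ and $f'$ on the closed fibre. The fall-back via the local quotient presentation is closer in spirit to how such statements are actually proved, but again this is exactly what \cite[Lemma~3.25]{MR4665776} packages. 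Just cite it.
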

\begin{rmk}
There are comments about Lemma \ref{lem:unique-closed}.
\begin{itemize}
\item 
The point $x_0$ is the unique closed point in $\mathscr{X}$ that $x$ can specialize to. If $\mathscr{X}$ admits an adequate moduli space $\phi: \mathscr{X} \to X$, it is also the unique closed point in the fiber $\phi^{-1}(\phi(x)) \subseteq \mathscr{X}$. 
\item
The condition (2) implies (1) if $\mathscr{X}$ has separated diagonal and the adequate moduli space $X$ is locally noetherian, see \cite[Proposition 3.21 (3)]{MR4665776}.
\end{itemize}
\end{rmk}
\begin{proof}
The conclusion under assumption (1) is \cite[Lemma 3.25]{MR4665776}. Suppose $\mathscr{X}$ admits an adequate moduli space $\phi: \mathscr{X} \to X$. Since $\mathscr{X}$ is locally reductive, any (geometric) point $x \in \mathscr{X}(\kappa)$ specializes to a closed point. This implies that the closure $\overline{\{x\}}$ contains at least one closed points, say $x_0 \in \mathscr{X}(\kappa)$. If $x'_0 \in \mathscr{X}(\kappa)$ is another closed point in $\overline{\{x\}}$, then $\phi(x_0) \neq \phi(x'_0)$ by \cite[Theorem 5.3.1 (4)]{MR3272912}, which is impossible since by \cite[Theorem 5.3.1 (5)]{MR3272912} we have $\phi(\overline{\{x\}})=\phi(x)$.
\end{proof}
\begin{lem}[\cite{MR4665776}, Lemma 3.24]\label{lem:fromTheta}
Let $\mathscr{X}$ be a locally reductive algebraic stack, locally of finite type over a field $k$. Then any specialization of a geometric point to a closed point in $\mathscr{X}$ can be realized as a morphism $[\mathbf{A}^1/\mathbf{G}_m] \to \mathscr{X}$.
\end{lem}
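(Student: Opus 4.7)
The plan is to combine the local structure theorem guaranteed by local reductivity with the classical Hilbert--Mumford criterion recorded in Example \ref{eg:filtration}(2). Concretely, suppose $x \in \mathscr{X}(\kappa)$ specializes to a closed point $x_0 \in \mathscr{X}(\kappa)$. Since $\mathscr{X}$ is locally of finite type, this specialization can be realized by a morphism $g: \mathrm{Spec}(R) \to \mathscr{X}$ from a DVR $R$ (with residue field a finite extension of $\kappa$) sending the generic point to $x$ and the closed point to $x_0$. The goal is to replace $\mathrm{Spec}(R)$ by $[\mathbf{A}^1/\mathbf{G}_m]$.

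The first step would be to pass to an étale local model. By local reductivity there is an étale morphism $f: [\mathrm{Spec}(A)/\mathrm{GL}_N] \to \mathscr{X}$ whose image contains $x_0$, and a closed point $\tilde{x}_0$ mapping to $x_0$. I would lift $g$ through $f$ to a morphism $\tilde{g}: \mathrm{Spec}(R') \to [\mathrm{Spec}(A)/\mathrm{GL}_N]$ (after perhaps enlarging $R$ to a suitable local extension $R'$) using Hensel-type lifting for étale morphisms of stacks, arranging that the closed point maps to $\tilde{x}_0$. Since $\mathrm{GL}_N$ is a special group, every $\mathrm{GL}_N$-torsor over $\mathrm{Spec}(R')$ is trivial, so $\tilde{g}$ is given by an honest morphism $\mathrm{Spec}(R') \to \mathrm{Spec}(A)$; call $y$ the image of the generic point. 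We now have a specialization $y \rightsquigarrow \tilde{x}_0$ inside an affine scheme with a $\mathrm{GL}_N$-action, with the target lying in a closed $\mathrm{GL}_N$-orbit.

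At this point I would invoke the classical Hilbert--Mumford criterion for the reductive group $\mathrm{GL}_N$ acting on $\mathrm{Spec}(A)$: there exist $h \in \mathrm{GL}_N$ and a cocharacter $\lambda: \mathbf{G}_m \to \mathrm{GL}_N$ such that $\lim_{t \to 0} \lambda(t) \cdot (h \cdot y)$ lies in the $\mathrm{GL}_N$-orbit of $\tilde{x}_0$. Up to conjugating $\lambda$ we may take the limit to equal $\tilde{x}_0$. By Example \ref{eg:filtration}(2), the pair $(h \cdot y, \lambda)$ defines a morphism $[\mathbf{A}^1/\mathbf{G}_m] \to [\mathrm{Spec}(A)/\mathrm{GL}_N]$ sending $1$ to $\tilde{x}$ and $0$ to $\tilde{x}_0$. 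Composing with $f$ yields a morphism $[\mathbf{A}^1/\mathbf{G}_m] \to \mathscr{X}$ sending $1$ to $x$ and $0$ to $x_0$, as required.

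The main obstacle I expect is the lifting step: one must ensure that the arc $g$ actually admits a lift through the étale cover $f$ that still hits the prescribed closed point $\tilde{x}_0$ (and not another closed point in the étale fiber), possibly after a finite residue-field extension which is harmless for the conclusion. This will rely on the uniqueness of the closed point in the relevant fiber (which is the content of Lemma \ref{lem:unique-closed}), combined with the standard deformation-theoretic lifting of étale maps applied to the strict henselization of $R$ at its closed point. Once this is in place the rest is a direct application of Hilbert--Mumford and the dictionary of Example \ref{eg:filtration}(2).
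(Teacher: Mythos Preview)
The paper does not give its own proof of this lemma; it is simply quoted as \cite[Lemma~3.24]{MR4665776}. Your outline is essentially the argument used there: pass to an \'etale chart $[\mathrm{Spec}(A)/\mathrm{GL}_N]$ furnished by local reductivity, lift the specialization to that chart, and then invoke the affine Hilbert--Mumford criterion (Example~\ref{eg:filtration}(2)) to produce the desired map from $[\mathbf{A}^1/\mathbf{G}_m]$. So you have reconstructed the intended proof.

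One correction to your sketch: the appeal to Lemma~\ref{lem:unique-closed} in the lifting step is both unnecessary and formally illegitimate, since that lemma requires $\mathscr{X}$ to be $\Theta$-reductive or to admit an adequate moduli space, neither of which is assumed here. The lifting of $g$ through the \'etale representable morphism $f$ so that the closed point lands at a \emph{chosen} preimage $\tilde{x}_0$ is a direct consequence of formal \'etaleness applied over the strict henselisation of $R$; one fixes $\tilde{x}_0$ first and then lifts, so no uniqueness of closed points is involved. (Note also that the image of $f$ is open and contains $x_0$, hence automatically contains the whole arc $g$, since the preimage of an open containing the closed point of $\mathrm{Spec}(R)$ is everything.) Finally, the fact you use without comment---that $\tilde{x}_0$ has closed $\mathrm{GL}_N$-orbit, so that Hilbert--Mumford applies---holds because $f$ is \'etale and representable: the preimage of the closed substack $\{x_0\}$ is a closed substack of $[\mathrm{Spec}(A)/\mathrm{GL}_N]$ that is finite \'etale over the residual gerbe of $x_0$, hence a finite disjoint union of closed points. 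With these adjustments your argument goes through.
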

Putting all these together, we see that for a locally reductive algebraic stack admitting an adequate moduli space quasi-Jordan-H\"{o}lder filtrations exist for any of its points. 
\begin{cor}
Let $\mathscr{X}$ be a locally reductive algebraic stack, locally of finite type over a field $k$. Suppose $\mathscr{X}$ admits an adequate moduli space. Then quasi-Jordan-H\"{o}lder filtrations exist for any geometric point of $\mathscr{X}$.
\end{cor}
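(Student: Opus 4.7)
The plan is to simply combine the two lemmas stated immediately before the corollary. Given a geometric point $x \in \mathscr{X}(\kappa)$, I need to produce a filtration $f: [\mathbf{A}^1_\kappa/\mathbf{G}_{m,\kappa}] \to \mathscr{X}$ with $f(1) \cong x$ and $f(0)$ equal to the unique closed point in $\overline{\{x\}}$.

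First I would invoke \Cref{lem:unique-closed}: since $\mathscr{X}$ is locally reductive, locally of finite type over $k$, and admits an adequate moduli space, the closure $\overline{\{x\}}$ contains a unique closed point $x_0 \in \mathscr{X}(\kappa)$. This takes care of the first bullet in \Cref{defn:JH} and singles out the required target for the filtration.

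Next I would apply \Cref{lem:fromTheta}: the specialization $x \rightsquigarrow x_0$ is a specialization of a geometric point to a closed point in the locally reductive stack $\mathscr{X}$, hence can be realized as a morphism $f: [\mathbf{A}^1_\kappa/\mathbf{G}_{m,\kappa}] \to \mathscr{X}$ with $f(1) \cong x$ and $f(0) \cong x_0$. By construction this $f$ is a filtration of $x$ whose associated graded point is the unique closed point in $\overline{\{x\}}$, so it is quasi-Jordan-Hölder in the sense of \Cref{defn:JH}.

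There is no real obstacle here; all the content lives in the two preceding lemmas, and the corollary is just their formal combination. The only thing worth a word is that \Cref{lem:fromTheta} is stated for specializations to closed points, so it is essential to have already used \Cref{lem:unique-closed} to know that $x$ does specialize to some closed point $x_0$ (rather than, say, having no closed specialization at all, as can happen on stacks like $\mathscr{B}un_n$ that lack an adequate moduli space).
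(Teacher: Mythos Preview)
Your proposal is correct and matches the paper's approach exactly: the paper gives no explicit proof for this corollary, simply prefacing it with ``Putting all these together,'' which refers precisely to the combination of \Cref{lem:unique-closed} and \Cref{lem:fromTheta} that you spell out.
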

For semistable vector bundles, the notion of quasi-Jordan-H\"{o}lder filtration recovers that of Jordan-H\"{o}lder filtration. For this we first need to identify all closed points in $\mathscr{B}un_n^{ss}$.
\begin{lem}\label{lem:closed=polystable}
A geometric point in $\mathscr{B}un_n^{ss}$ is closed if and only if it is polystable.
\end{lem}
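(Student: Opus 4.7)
The plan is to prove the two implications separately, using the Rees description of filtrations (Example \ref{eg:filtration} (1)) to translate between morphisms from $[\mathbf{A}^1/\mathbf{G}_m]$ and honest filtrations of coherent sheaves, together with the refinement lemma stated just before Lemma \ref{lem:S-in-quotient}. Since $\mathscr{B}un_n^{ss}$ is an open substack of $\mathscr{C}oh_n$, a filtration $\mathcal{E}^\bullet$ lifts to a morphism into $\mathscr{B}un_n^{ss}$ precisely when the associated graded $\mathrm{gr}(\mathcal{E}^\bullet)$ is itself semistable.

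For the forward direction, I would take any Jordan--H\"{o}lder filtration $\mathcal{E}^{\mathrm{JH}}$ of $\mathcal{E}$. Its associated graded is polystable and in particular semistable, so Rees produces a morphism $f\colon [\mathbf{A}^1_\kappa/\mathbf{G}_{m,\kappa}]\to\mathscr{B}un_n^{ss}$ with $f(1)=\mathcal{E}$ and $f(0)=\mathrm{gr}(\mathcal{E}^{\mathrm{JH}})$. Thus $\mathcal{E}$ specializes to the polystable bundle $\mathrm{gr}(\mathcal{E}^{\mathrm{JH}})$ inside $\mathscr{B}un_n^{ss}$. Closedness of $\mathcal{E}$ forces this specialization to be trivial, hence $\mathcal{E}\cong\mathrm{gr}(\mathcal{E}^{\mathrm{JH}})$ is polystable.

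For the converse, given a polystable $\mathcal{E}$, I would pin down the unique closed point $\mathcal{F}\in\overline{\{\mathcal{E}\}}$ furnished by Lemma \ref{lem:unique-closed} (applicable since $\mathscr{B}un_n^{ss}$ is locally reductive and admits an adequate moduli space) and then argue $\mathcal{E}\cong\mathcal{F}$ by exhibiting a reverse specialization $\mathcal{F}\rightsquigarrow\mathcal{E}$. By Lemma \ref{lem:fromTheta} combined with Rees, the specialization $\mathcal{E}\rightsquigarrow\mathcal{F}$ is realized by a filtration $\mathcal{E}^\bullet$ with $\mathrm{gr}(\mathcal{E}^\bullet)=\mathcal{F}$. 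Applying the refinement lemma to $\mathcal{E}^\bullet$ yields a JH filtration $\mathcal{E}^{\mathrm{JH}}$ of $\mathcal{E}$, and since $\mathcal{E}$ is polystable its unique polystable JH-graded is $\mathcal{E}$ itself, so $\mathrm{gr}(\mathcal{E}^{\mathrm{JH}})\cong\mathcal{E}$. Applying the same refinement lemma to $\mathcal{F}$ equipped with its trivial filtration produces a JH filtration $\mathcal{F}^{\mathrm{JH}}$ of $\mathcal{F}$. The invariance clause of the refinement lemma --- that $\mathrm{gr}(\cdot^{\mathrm{JH}})$ depends only on the associated graded of the starting filtration --- together with $\mathrm{gr}(\mathcal{E}^\bullet)=\mathcal{F}=\mathrm{gr}(\text{trivial filtration of }\mathcal{F})$ forces $\mathrm{gr}(\mathcal{F}^{\mathrm{JH}})\cong\mathrm{gr}(\mathcal{E}^{\mathrm{JH}})\cong\mathcal{E}$. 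Rees applied to $\mathcal{F}^{\mathrm{JH}}$ then delivers the desired specialization $\mathcal{F}\rightsquigarrow\mathcal{E}$, and closedness of $\mathcal{F}$ yields $\mathcal{E}\cong\mathcal{F}$, so $\mathcal{E}$ is closed.

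The main obstacle I anticipate is avoiding a subtle circularity in the converse. A tempting shortcut is to declare that $\mathcal{F}$ is closed, hence polystable by the already-proved forward direction, and then invoke uniqueness of the polystable representative in an S-equivalence class to conclude $\mathcal{E}\cong\mathcal{F}$. This is logically sound but chains the two implications; the route above is cleaner in that it only uses the topological closedness of $\mathcal{F}$ and the invariance statement of the refinement lemma. A secondary point to verify carefully is that each application of the refinement lemma is valid, i.e.\ that $\mathrm{gr}$ of the relevant filtration is semistable: for $\mathcal{E}^\bullet$ this is built into its Rees description as a morphism into $\mathscr{B}un_n^{ss}$, and for the trivial filtration of $\mathcal{F}$ the associated graded is just $\mathcal{F}\in\mathscr{B}un_n^{ss}$.
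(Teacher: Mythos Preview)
Your proof is correct, but both directions differ from the paper's approach in interesting ways.

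For the forward direction, the paper argues the contrapositive: given a non-polystable $\mathcal{E}$, it writes out the Atiyah--Krull--Schmidt decomposition into indecomposables, picks a strictly semistable summand, and builds an explicit two-step filtration with semistable graded not isomorphic to $\mathcal{E}$. Your route via the Jordan--H\"older filtration is cleaner and avoids the indecomposable decomposition entirely; it also makes transparent that the specialization target is exactly $\mathrm{gr}(\mathcal{E}^{\mathrm{JH}})$.

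For the converse, the paper takes the same first steps (Lemma~\ref{lem:unique-closed} and Lemma~\ref{lem:fromTheta} to get a filtration $\mathcal{E}^\bullet$ of $\mathcal{E}$ with $\mathrm{gr}(\mathcal{E}^\bullet)=\mathcal{E}'$ closed), but then finishes with a direct bundle-theoretic fact: each step $\mathcal{E}^i\subseteq\mathcal{E}$ has the same slope as $\mathcal{E}$, and a same-slope subbundle of a polystable bundle is a direct summand, so the filtration splits and $\mathcal{E}'\cong\mathcal{E}$. Your argument instead leverages the invariance clause of the refinement lemma to produce a \emph{reverse} specialization $\mathcal{F}\rightsquigarrow\mathcal{E}$ and concludes by closedness of $\mathcal{F}$. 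The paper's route is shorter and uses a single structural fact about polystable bundles; yours is more ``intrinsic'' in that it stays entirely within the filtration/Rees formalism and does not invoke the direct-summand lemma. Both are valid, and your self-aware remark about avoiding circularity is well taken --- the argument as written does not use the forward implication.
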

\begin{proof}
For any geometric point $\mathcal{E} \in \mathscr{B}un_n^{ss}(\kappa)$, let $\mathcal{E}=\mathcal{E}_1^{\oplus n_1} \oplus \cdots \oplus \mathcal{E}_s^{\oplus n_s}$ be the decomposition into distinct indecomposables (see \cite[Theorem 3]{MR86358}). Then each $\mathcal{E}_i$ is again semistable of the same slope. 

If $\mathcal{E}$ is not polystable, i.e., the summand $\mathcal{E}_i$ is strictly semistable for some $i$, then there exists a non-trivial proper stable subbundle $0 \neq \mathcal{F}_i \subsetneq \mathcal{E}_i$ of the same slope and thus the quotient $\mathcal{E}_i/\mathcal{F}_i$ is also semistable of the same slope. Consider the following filtration of $\mathcal{E}$
\[
\mathcal{E}^\bullet: 0 \subseteq \mathcal{F}_i \subseteq \mathcal{E}_i \subseteq \mathcal{E}_i^{\oplus n_i} \subseteq \mathcal{E}.
\]
The associated graded sheaf $\mathrm{gr}(\mathcal{E}^\bullet)$ is semistable and not isomorphic to $\mathcal{E}$. By the Rees construction $\mathrm{gr}(\mathcal{E}^\bullet) \in \overline{\{\mathcal{E}\}}$, i.e., $\mathcal{E} \in \mathscr{B}un_n^{ss}(\kappa)$ is not closed.

Conversely, if $\mathcal{E}$ is polystable, then by Lemma \ref{lem:unique-closed} there exists a unique closed point in $\overline{\{\mathcal{E}\}}$, say $\mathcal{E}' \in \overline{\{\mathcal{E}\}}$. By Lemma \ref{lem:fromTheta} and the Rees construction there exists a filtration $\mathcal{E}^\bullet$ of $\mathcal{E}$ such that $\mathcal{E}'=\mathrm{gr}(\mathcal{E}^\bullet) \in \mathscr{B}un_n^{ss}(\kappa)$. Then each term $\mathcal{E}^i$ in the filtration $\mathcal{E}^\bullet$ is a subbundle of $\mathcal{E}$ of the same slope, which has to be a direct summand of $\mathcal{E}$ (see, e.g. \cite[Corollary 1.6.11]{MR2665168}). This shows that $\mathcal{E}' \cong \mathcal{E}$ and thus $\mathcal{E} \in \mathscr{B}un_n^{ss}(\kappa)$ is closed.
\end{proof}
\begin{lem}
For any geometric point $\mathcal{E} \in \mathscr{B}un_n^{ss}(\kappa)$, a filtration of $\mathcal{E}$ is a quasi-Jordan-H\"{o}lder filtration if and only if it corresponds to a Jordan-H\"{o}lder filtration of $\mathcal{E}$.
\end{lem}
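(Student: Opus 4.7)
The plan is to reduce both directions to the identification of closed points of $\mathscr{B}un_n^{ss}$ with polystable bundles (Lemma \ref{lem:closed=polystable}) and the uniqueness of the closed point in $\overline{\{\mathcal{E}\}}$ (Lemma \ref{lem:unique-closed}), together with the refinement statement embedded in the proof of Lemma \ref{lem:qS-recover} (any filtration of a semistable bundle whose associated graded is again semistable refines to a classical Jordan-Hölder filtration). I will repeatedly use that for a filtration $f: [\mathbf{A}^1_\kappa/\mathbf{G}_{m,\kappa}] \to \mathscr{B}un_n^{ss}$ corresponding to a $\mathbf{Z}$-graded filtration $\mathcal{E}^\bullet$ via the Rees construction, one has $f(0) = \mathrm{gr}(\mathcal{E}^\bullet)$ lying in $\overline{\{f(1)\}} = \overline{\{\mathcal{E}\}}$, since $0$ lies in the topological closure of $1$ in $[\mathbf{A}^1/\mathbf{G}_m]$.

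For the ``$\Leftarrow$'' direction, suppose $\mathcal{E}^\bullet$ corresponds to a Jordan-Hölder filtration, so each successive quotient $\mathcal{E}^i/\mathcal{E}^{i-1}$ is stable of slope $\mu(\mathcal{E})$. Then $\mathrm{gr}(\mathcal{E}^\bullet)$ is polystable of slope $\mu(\mathcal{E})$, and is therefore a closed geometric point of $\mathscr{B}un_n^{ss}$ by Lemma \ref{lem:closed=polystable}. Combined with $\mathrm{gr}(\mathcal{E}^\bullet) \in \overline{\{\mathcal{E}\}}$ and the uniqueness of the closed point in this closure (Lemma \ref{lem:unique-closed}), $\mathrm{gr}(\mathcal{E}^\bullet)$ must be that unique closed point, so $f$ is quasi-Jordan-Hölder.

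For the ``$\Rightarrow$'' direction, suppose $f$ is quasi-Jordan-Hölder, so $\mathrm{gr}(\mathcal{E}^\bullet) = f(0)$ is the unique closed point in $\overline{\{\mathcal{E}\}}$. By Lemma \ref{lem:closed=polystable}, this associated graded is polystable, and in particular semistable. Applying the refinement statement of Lemma \ref{lem:qS-recover}'s proof to $\mathcal{E}^\bullet$ produces a refinement which is a classical Jordan-Hölder filtration of $\mathcal{E}$, yielding the desired correspondence. The one interpretive point worth making explicit is that ``corresponds to a Jordan-Hölder filtration'' is to be read up to refinement of the $\mathbf{Z}$-grading: a quasi-Jordan-Hölder filtration need not have stable successive quotients on the nose, but its underlying chain of subbundles always refines to one whose successive quotients are stable of slope $\mu(\mathcal{E})$. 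I do not anticipate any serious obstacle, since the entire verification is reduced to the three cited lemmas once this convention is fixed.
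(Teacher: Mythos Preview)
Your core approach is the same as the paper's: both directions go through the identification \emph{closed point $\Leftrightarrow$ polystable} (Lemma~\ref{lem:closed=polystable}), and the paper's one-line proof is exactly the chain ``$\mathrm{gr}(\mathcal{E}^\bullet)$ closed $\Leftrightarrow$ $\mathrm{gr}(\mathcal{E}^\bullet)$ polystable $\Leftrightarrow$ $\mathcal{E}^\bullet$ is Jordan--H\"older.'' Your $\Leftarrow$ direction is correct and just a more explicit version of this.

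The problem is your closing interpretive move in the $\Rightarrow$ direction. Reading ``corresponds to a Jordan--H\"older filtration'' as ``refines to one'' is not a viable convention: under that reading the $\Leftarrow$ direction fails. Indeed, take $\mathcal{E}$ strictly semistable and $\mathcal{E}^\bullet$ the trivial filtration $0 \subseteq \mathcal{E}$; this refines to a Jordan--H\"older filtration, yet $f(0)=\mathrm{gr}(\mathcal{E}^\bullet)=\mathcal{E}$ is \emph{not} closed, so $f$ is not quasi-Jordan--H\"older. Thus your two directions are proved under inconsistent readings of the hypothesis.

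The paper simply treats ``$\mathcal{E}^\bullet$ is a Jordan--H\"older filtration'' as synonymous with ``$\mathrm{gr}(\mathcal{E}^\bullet)$ is polystable'' (equivalently, each successive quotient is polystable of slope $\mu(\mathcal{E})$), in which case your $\Rightarrow$ direction concludes immediately once you have shown $\mathrm{gr}(\mathcal{E}^\bullet)$ polystable---no refinement step is needed. If one insists on the stricter convention that each successive quotient be \emph{stable}, then the lemma as stated is only true in the $\Leftarrow$ direction (your counterexample type: $\mathcal{E}=\mathcal{L}_1\oplus\mathcal{L}_2\oplus\mathcal{L}_3$ with filtration $0\subseteq\mathcal{L}_1\subseteq\mathcal{E}$ is quasi-Jordan--H\"older but its second sub-quotient is not stable). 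That is a terminological wrinkle in the paper itself, not something refinement can repair.
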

\begin{proof}
For any filtration $\mathcal{E}^\bullet$ of $\mathcal{E}$, the associated graded sheaf $\mathrm{gr}(\mathcal{E}^\bullet)$ is closed if and only if it is polystable (Lemma \ref{lem:closed=polystable}), if and only if $\mathcal{E}^\bullet$ is a Jordan-H\"{o}lder filtration.   
\end{proof}
\begin{rmk}
One can prove the similar results for the stack of principal bundles over a curve, using the description of morphisms from $[\mathbf{A}^1/\mathbf{G}_m]$ established in \cite[Lemma 1.13]{MR3758902}.
\end{rmk}
\section{Identification of points}
After these preparations we can relate identification of points in the moduli space to quasi-S-equivalence, as well as to quasi-Jordan-H\"{o}lder filtrations.
\begin{thm}\label{thm:equivalence}
Let $\mathscr{X}$ be a locally reductive algebraic stack, locally of finite type over a field $k$. Suppose $\mathscr{X}$ admits an adequate moduli space $\phi: \mathscr{X} \to X$. Then for any geometric points $x_1,x_2 \in \mathscr{X}(\kappa)$, the following are equivalent:
\begin{enumerate}
\item 
The equality $\phi(x_1)=\phi(x_2)$ holds.
\item 
The points $x_1,x_2$ are quasi-S-equivalent.
\item 
There exist quasi-Jordan-H\"{o}lder filtrations $f_1,f_2$ of $x_1,x_2$ respectively such that $f_1(0)=f_2(0)$.
\end{enumerate}
In particular, quasi-S-equivalence is an equivalence relation on points of $\mathscr{X}$.
\end{thm}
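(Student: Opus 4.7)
The plan is to prove the cycle $(3) \Rightarrow (2) \Rightarrow (1) \Rightarrow (3)$, after which the last claim (that quasi-S-equivalence is an equivalence relation) drops out from the equivalence with $\phi(x_1)=\phi(x_2)$, since having the same image under any map is trivially an equivalence relation.

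The implication $(3) \Rightarrow (2)$ is immediate from the definitions: a quasi-Jordan-H\"{o}lder filtration is, in particular, a filtration, so the witnesses $f_1,f_2$ of $(3)$ witness quasi-S-equivalence. For $(2) \Rightarrow (1)$, I would simply invoke \Cref{lem:const-on-qS} applied to the adequate moduli space morphism $\phi: \mathscr{X} \to X$, which is a morphism to an algebraic space.

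The substantive step is $(1) \Rightarrow (3)$. Assume $\phi(x_1)=\phi(x_2)$. By \Cref{lem:unique-closed} and the remark following it, each closure $\overline{\{x_i\}}$ contains a unique closed point $x_{i,0} \in \mathscr{X}(\kappa)$, and moreover $x_{i,0}$ is the unique closed point of the fiber $\phi^{-1}(\phi(x_i))$. Since $\phi(x_1)=\phi(x_2)$, the two fibers coincide, so by uniqueness $x_{1,0}=x_{2,0}=:x_0$. Now I apply \Cref{lem:fromTheta}: local reductivity of $\mathscr{X}$ ensures that each specialization $x_i \rightsquigarrow x_0$ can be realized as a morphism $f_i: [\mathbf{A}^1_{\kappa}/\mathbf{G}_{m,\kappa}] \to \mathscr{X}$ with $f_i(1) \cong x_i$ and $f_i(0)=x_0$. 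Since $x_0$ is the unique closed point in $\overline{\{x_i\}}$, each $f_i$ is by definition a quasi-Jordan-H\"{o}lder filtration of $x_i$, and $f_1(0)=f_2(0)=x_0$, giving $(3)$.

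Finally, from the equivalence of $(1)$ and $(2)$, quasi-S-equivalence on points of $\mathscr{X}$ coincides with the relation ``same image under $\phi$,'' which is manifestly an equivalence relation; in particular quasi-S-equivalence is transitive on $\mathscr{X}$. I do not anticipate a real obstacle: the work has been front-loaded into \Cref{lem:unique-closed} and \Cref{lem:fromTheta}, and the only thing to check carefully is that the two unique closed points $x_{1,0}$ and $x_{2,0}$ must coincide when $\phi(x_1)=\phi(x_2)$, which uses the ``unique closed point in the fiber'' strengthening from the remark after \Cref{lem:unique-closed}.
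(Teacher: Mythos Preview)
Your proof is correct and follows essentially the same cycle $(3)\Rightarrow(2)\Rightarrow(1)\Rightarrow(3)$ as the paper; the argument for $(1)\Rightarrow(3)$ is identical. The only minor difference is in $(2)\Rightarrow(1)$: you invoke \Cref{lem:const-on-qS}, whereas the paper appeals directly to \cite[Theorem 5.3.1 (5)]{MR3272912} via the observation that $\overline{\{x_1\}}\cap\overline{\{x_2\}}\ni f_1(0)=f_2(0)$; both are valid, and your route is arguably cleaner since it keeps the argument internal to the paper.
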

\begin{rmk}
This confirms the very expectation that quasi-S-equivalent points should be identified in the moduli space (whenever it exists). Unlike S-equivalence, quasi-S-equivalence just tells us which points get identified in the moduli space, it has nothing to do with the separatedness of the moduli space.
\end{rmk}
\begin{proof}
The implication (3) $\Rightarrow$ (2) is clear. The implication (2) $\Rightarrow$ (1) follows from \cite[Theorem 5.3.1 (5)]{MR3272912} since $\overline{\{x_1\}} \cap \overline{\{x_2\}}$ contains the point $f_1(0)=f_2(0)$ for any filtrations $f_1,f_2$ of $x_1,x_2$ witnessing their quasi-S-equivalence. Finally, if $\phi(x_1)=\phi(x_2)$, then $x_{1,0}=x_{2,0}$ as they are both the unique closed point in the fiber of $\phi(x_1)=\phi(x_2)$ under the adequate moduli space $\phi: \mathscr{X} \to X$. Since $\mathscr{X}$ is locally reductive, by Lemma \ref{lem:fromTheta} any specialization $x_i \leadsto x_{i,0}$ can be realized as a morphism $f_i: [\mathbf{A}^1_{\kappa}/\mathbf{G}_{m,\kappa}] \to \mathscr{X}$. This shows (1) $\Rightarrow$ (3).
\end{proof}
For coherent sheaves, Lemma \ref{lem:S-in-Coh} and Theorem \ref{thm:equivalence} yield the following:
\begin{cor}\label{cor:main}
Let $\mathscr{U} \subseteq \mathscr{C}oh_n$ be an open substack. Then for any geometric points $\mathcal{E}_1,\mathcal{E}_2 \in \mathscr{U}(\kappa)$, the following are equivalent:
\begin{enumerate}
\item 
The points $\mathcal{E}_1,\mathcal{E}_2$ are quasi-S-equivalent.
\item 
There exist filtrations $\mathcal{E}_1^\bullet,\mathcal{E}_2^\bullet$ of $\mathcal{E}_1,\mathcal{E}_2$ respectively such that $\mathrm{gr}(\mathcal{E}_1^\bullet)=\mathrm{gr}(\mathcal{E}_2^\bullet) \in \mathscr{U}(\kappa)$.
\end{enumerate}
Moreover, if $\mathscr{U}$ admits an adequate moduli space $\phi: \mathscr{U} \to U$, then (1) and (2) are further equivalent to:
\begin{enumerate}
\item[(3)]
The points $\mathcal{E}_1,\mathcal{E}_2$ are identified in $U$.
\end{enumerate}
\end{cor}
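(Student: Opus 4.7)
The plan is to assemble the corollary essentially as a direct application of the two main tools already in the excerpt: Lemma \ref{lem:S-in-Coh} gives the equivalence (1) $\Leftrightarrow$ (2), and Theorem \ref{thm:equivalence} gives (1) $\Leftrightarrow$ (3) once we verify its hypotheses for $\mathscr{U}$. So the bulk of the proof is a matter of citing these two results in the correct order.

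First I would record that Lemma \ref{lem:S-in-Coh} already states the equivalence of (1) and (2) verbatim, with no additional work needed. This part of the corollary holds whether or not $\mathscr{U}$ admits an adequate moduli space, which matches the way the statement is phrased.

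Next, assuming that $\mathscr{U}$ admits an adequate moduli space $\phi: \mathscr{U} \to U$, I would invoke Theorem \ref{thm:equivalence} to obtain (1) $\Leftrightarrow$ (3). To do this I need to check that $\mathscr{U}$ is locally reductive and locally of finite type over $k$. Local finiteness of type is inherited from $\mathscr{C}oh_n$, which is locally of finite type over $k$, and local reductivity is the content of the second bullet in the remark after Theorem \ref{thm:in-equivalence}: $\mathscr{C}oh_n$ admits a representable morphism to a quotient stack for an action of a reductive group, and this property passes to any open substack, so Proposition 3.28 of \cite{dario-zhang} (as cited in that remark) applies. Once this is in place Theorem \ref{thm:equivalence} delivers the remaining equivalence.

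The main obstacle, such as it is, is merely bookkeeping: ensuring that the local reductivity hypothesis of Theorem \ref{thm:equivalence} is correctly transferred from $\mathscr{C}oh_n$ to the open substack $\mathscr{U}$, and making clear that (1) $\Leftrightarrow$ (2) does not require the moduli space hypothesis while (1) $\Leftrightarrow$ (3) does. No new geometric input is needed beyond what has already been established, and the proof is essentially a two-line citation.
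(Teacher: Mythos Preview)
Your proposal is correct and matches the paper's own proof, which simply states that Lemma \ref{lem:S-in-Coh} and Theorem \ref{thm:equivalence} yield the corollary. If anything, you are more careful than the paper in explicitly verifying the local reductivity hypothesis for $\mathscr{U}$ via the remark after Theorem \ref{thm:in-equivalence}.
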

\begin{rmk}
There are some comments on the condition $(2)$ in Corollary \ref{cor:main}.
\begin{itemize}
\item 
It is not enough to just assume in (2) that the associated graded sheaves are the same. There exist open substacks $\mathscr{U} \subseteq \mathscr{C}oh_n$ and geometric points $\mathcal{E}_1,\mathcal{E}_2 \in \mathscr{U}(\kappa)$ with filtrations $\mathcal{E}_1^\bullet,\mathcal{E}_2^\bullet$ respectively such that $\mathrm{gr}(\mathcal{E}_1^\bullet)=\mathrm{gr}(\mathcal{E}_2^\bullet) \notin \mathscr{U}(\kappa)$, so are not identified in the adequate moduli space.
\begin{itemize}
\item
(Non-separated example) In the open substack $\mathscr{B}un_n^{simple}$ of simple rank $n$ vector bundles, no points are identified in the good moduli space. However, there are points with opposite filtrations (see \cite[Corollary 3.19]{dario-zhang}).
\item
(Proper example) In the open substack $\mathscr{B}un_2^{s}$ of stable rank $2$ vector bundles, no points are identified in the good moduli space. However, any non-split extension
\[
0 \to \mathcal{L}_0 \to \mathcal{E} \to \mathcal{L}_1 \to 0
\]
is stable and defines a point of $\mathscr{B}un_2^{s}$, where $\mathcal{L}_i$ is a degree $i$ line bundle.
\end{itemize}
\item
It is incorrect to only consider opposite filtrations (see \cite[Definition 3.15]{dario-zhang}) in (2). There exist open substacks $\mathscr{U} \subseteq \mathscr{C}oh_n$ and geometric points $\mathcal{E}_1,\mathcal{E}_2 \in \mathscr{U}(\kappa)$ that are identified in the adequate moduli space without admitting opposite filtrations. 

For example, in the open substack $\mathscr{B}un_2^{ss}$ of semistable rank $2$ vector bundles, any non-split extension $0 \to \mathcal{L} \to \mathcal{E} \to \mathcal{L}' \to 0$ is semistable and defines a point of $\mathscr{B}un_2^{ss}$, where $\mathcal{L} \ncong \mathcal{L}'$ are degree $0$ line bundles. These extensions are identified in the adequate moduli space of $\mathscr{B}un_2^{ss}$ without admitting opposite filtrations. 
\end{itemize}
\end{rmk}
It is known from Corollary \ref{cor:main} that no points get identified under the good moduli spaces $\mathscr{B}un_n^{simple} \to M_n^{simple}$ and $\mathscr{B}un_n^{s} \to M_n^{s}$. This can be slightly generalized.
\begin{lem}
Let $\mathscr{U} \subseteq \mathscr{B}un_n$ be an open substack. Then the following are equivalent:
\begin{enumerate}
\item 
No geometric points of $\mathscr{U}$ are quasi-S-equivalent.
\item 
The open substack $\mathscr{U}$ is contained in the open substack $\mathscr{B}un_n^{indec} \subseteq \mathscr{B}un_n$ of indecomposable vector bundles.
\end{enumerate}
Moreover, if $\mathscr{U}$ admits an adequate moduli space $\mathscr{U} \to U$, then (1) and (2) are further equivalent to:
\begin{enumerate}
\item[(3)]
No geometric points of $\mathscr{U}$ are identified in $U$.
\end{enumerate}
\end{lem}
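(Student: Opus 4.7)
The plan is to prove $(1) \Leftrightarrow (2)$ unconditionally, while $(1) \Leftrightarrow (3)$ (under the moduli-space hypothesis) is immediate from Corollary \ref{cor:main}: that corollary identifies ``quasi-S-equivalent in $\mathscr{U}$'' with ``identified in the adequate moduli space'', so negating both sides yields $(1) \Leftrightarrow (3)$.

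For $(2) \Rightarrow (1)$, I would take quasi-S-equivalent $\mathcal{E}_1, \mathcal{E}_2 \in \mathscr{U}(\kappa)$ and apply Lemma \ref{lem:S-in-Coh} to produce filtrations $\mathcal{E}_i^\bullet$ with common associated graded $\mathcal{G} \in \mathscr{U}(\kappa)$. Since $\mathcal{G}$ is indecomposable (it lies in $\mathscr{B}un_n^{indec}$) but equals $\bigoplus_j \mathcal{E}_i^j/\mathcal{E}_i^{j+1}$, exactly one successive quotient is non-zero; this forces each filtration to be concentrated in a single weight and $\mathcal{E}_i \cong \mathcal{G}$, hence $\mathcal{E}_1 \cong \mathcal{E}_2$, confirming (1).

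For $(1) \Rightarrow (2)$ I argue by contrapositive: assume some $\mathcal{E} = \mathcal{E}' \oplus \mathcal{E}'' \in \mathscr{U}(\kappa)$ with $\mathcal{E}', \mathcal{E}'' \neq 0$, and produce a distinct quasi-S-equivalent point in $\mathscr{U}$. A Riemann--Roch computation yields
\[
\chi(\mathcal{H}om(\mathcal{E}'', \mathcal{E}')) + \chi(\mathcal{H}om(\mathcal{E}', \mathcal{E}'')) = 2\, \mathrm{rank}(\mathcal{E}')\, \mathrm{rank}(\mathcal{E}'')\, (1-g),
\]
which is non-positive on a curve of positive genus, so at least one of $\mathrm{Ext}^1(\mathcal{E}'', \mathcal{E}')$ and $\mathrm{Ext}^1(\mathcal{E}', \mathcal{E}'')$ is non-zero. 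Choosing a non-zero class $\xi \in \mathrm{Ext}^1(\mathcal{E}'', \mathcal{E}')$ produces a non-split extension $0 \to \mathcal{E}' \to \tilde{\mathcal{E}} \to \mathcal{E}'' \to 0$, and the Rees construction (Example \ref{eg:filtration}(1)) turns the filtration $0 \subset \mathcal{E}' \subset \tilde{\mathcal{E}}$ into a morphism $f : [\mathbf{A}^1/\mathbf{G}_m] \to \mathscr{B}un_n$ with $f(0) \cong \mathcal{E}' \oplus \mathcal{E}'' = \mathcal{E}$ and $f(1) \cong \tilde{\mathcal{E}}$.

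The main step that deserves care is showing $\tilde{\mathcal{E}} \in \mathscr{U}$: since $\mathscr{U} \subseteq \mathscr{B}un_n$ is open and $\mathcal{E} = f(0) \in \mathscr{U}$, the preimage $f^{-1}(\mathscr{U})$ is an open substack of $[\mathbf{A}^1/\mathbf{G}_m]$ containing the unique closed point $0$, hence equals all of $[\mathbf{A}^1/\mathbf{G}_m]$; in particular $\tilde{\mathcal{E}} = f(1) \in \mathscr{U}$. A non-split extension is never isomorphic to the split direct sum, so $\tilde{\mathcal{E}} \ncong \mathcal{E}$ provides the desired second quasi-S-equivalent point of $\mathscr{U}$, completing the contrapositive.
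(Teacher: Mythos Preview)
Your overall strategy coincides with the paper's: Lemma~\ref{lem:S-in-Coh} for $(2)\Rightarrow(1)$, and for $\neg(2)\Rightarrow\neg(1)$ start from a decomposable $\mathcal{E}'\oplus\mathcal{E}''\in\mathscr{U}$ and use openness (your observation that $f^{-1}(\mathscr{U})$ contains the closed point $0$ of $[\mathbf{A}^1/\mathbf{G}_m]$) to place all extensions of the summands inside $\mathscr{U}$. The paper stops there; you go further and try to exhibit a \emph{distinct} second point by producing a non-split extension via Riemann--Roch. That extra step is exactly what the paper's terse argument is missing, so your instinct is right.

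However, your Riemann--Roch deduction is not valid as stated. From
\[
\chi(\mathcal{H}om(\mathcal{E}'',\mathcal{E}'))+\chi(\mathcal{H}om(\mathcal{E}',\mathcal{E}''))=2\,r'r''(1-g)\le 0
\]
one cannot conclude that one of the $\mathrm{Ext}^1$ groups is non-zero: when $g=1$ both $\chi$'s can vanish together with both $\mathrm{Hom}$'s, hence both $\mathrm{Ext}^1$'s. Concretely, on an elliptic curve $E$ take $\mathcal{E}'=\mathcal{O}$ and $\mathcal{E}''=\mathcal{L}$ a non-trivial degree-$0$ line bundle; then all four groups vanish. In fact this produces a counterexample to the lemma itself: the open substack
\[
\mathscr{U}=\phi^{-1}\bigl(\mathrm{Sym}^2(E)\setminus\Delta\bigr)\subset\mathscr{B}un_2^{ss,0}
\]
(preimage of the off-diagonal under the moduli map) consists precisely of the bundles $\mathcal{L}_1\oplus\mathcal{L}_2$ with $\mathcal{L}_1\ncong\mathcal{L}_2$ in $\mathrm{Pic}^0(E)$; these are all decomposable yet pairwise non--quasi-S-equivalent in $\mathscr{U}$, so $(1)$ holds while $(2)$ fails. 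Likewise for $g=0$ the single-point open substack $\{\mathcal{O}^2\}=\mathscr{B}un_2^{ss,0}\subset\mathscr{B}un_2$ on $\mathbf{P}^1$ already violates the equivalence. Your argument is correct for $g\ge 2$ (where the sum is strictly negative, forcing some $h^1>0$); the statement needs that hypothesis, and neither the paper's proof nor yours can dispense with it.
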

\begin{proof}
If $\mathscr{U} \subseteq \mathscr{B}un_n^{indec}$, then it has no quasi-S-equivalent points by Lemma \ref{lem:S-in-Coh}. This shows $(2) \Rightarrow (1)$. Conversely, if $\mathscr{U}$ contains a decomposable vector bundle, say $\mathcal{E}_1 \oplus \mathcal{E}_2 \in \mathscr{U}(\kappa)$, then any extension of $\mathcal{E}_i$ by $\mathcal{E}_j$ lies in $\mathscr{U}$ since $\mathscr{U} \subseteq \mathscr{B}un_n$ is open, and all of them are quasi-S-equivalent by Lemma \ref{lem:S-in-Coh}.
\end{proof}
\subsection{Application to moduli of semistable bundles}
We first show that the identification of semistable vector bundles gives rise to the notion of S-equivalence.
\begin{cor}\label{cor:usual-S}
Two geometric points $\mathcal{E}_1,\mathcal{E}_2 \in \mathscr{B}un_n^{ss}(\kappa)$ are identified in the adequate moduli space if and only if they are S-equivalent.
\end{cor}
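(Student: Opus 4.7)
The plan is to deduce the corollary as a direct combination of two results already established in the paper, namely Theorem \ref{thm:equivalence} and Lemma \ref{lem:qS-recover}. Schematically the argument is the syllogism: identification in the adequate moduli space $\Leftrightarrow$ quasi-S-equivalence in $\mathscr{B}un_n^{ss}$ $\Leftrightarrow$ classical S-equivalence.

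First I would verify that Theorem \ref{thm:equivalence} applies to $\mathscr{B}un_n^{ss}$ (with fixed rank $n$ and fixed degree, so that the stack is quasi-compact). The existence of an adequate moduli space is classical, going back to Seshadri and Gieseker. For local reductivity, I would invoke the second bullet of the remark following Theorem \ref{thm:in-equivalence}: since $\mathscr{B}un_n^{ss}$ is a quasi-compact open substack of $\mathscr{C}oh_n$, it admits a representable morphism to the quotient stack for a reductive group action, as explained in the reference given there.

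With these hypotheses in place, Theorem \ref{thm:equivalence} yields that $\mathcal{E}_1,\mathcal{E}_2$ have the same image in the adequate moduli space of $\mathscr{B}un_n^{ss}$ if and only if they are quasi-S-equivalent in $\mathscr{B}un_n^{ss}$. Combining this with Lemma \ref{lem:qS-recover}, which states that for semistable vector bundles quasi-S-equivalence in $\mathscr{B}un_n^{ss}$ coincides with classical S-equivalence, completes the chain of equivalences.

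There is no real obstacle here; all the substantive content is packaged inside Theorem \ref{thm:equivalence} and Lemma \ref{lem:qS-recover}. The only point that deserves a moment's attention is to ensure the ambient stack in which quasi-S-equivalence is measured matches in both steps. In Lemma \ref{lem:qS-recover} quasi-S-equivalence is explicitly taken inside $\mathscr{B}un_n^{ss}$ (the associated graded of any witnessing filtration must be semistable), which is exactly the stack to which we apply Theorem \ref{thm:equivalence}, so no additional argument is required.
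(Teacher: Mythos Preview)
Your proposal is correct and follows essentially the same approach as the paper. The paper's proof cites Corollary \ref{cor:main} (which is just Theorem \ref{thm:equivalence} specialized to open substacks of $\mathscr{C}oh_n$) together with Lemma \ref{lem:qS-recover}; you invoke Theorem \ref{thm:equivalence} directly and supply the hypothesis verification that the paper has already absorbed into Corollary \ref{cor:main}.
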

\begin{proof}
This follows from Corollary \ref{cor:main} and Lemma \ref{lem:qS-recover}.
\end{proof}
\subsection{Application to other moduli of bundles}
Let $\lambda$ be the polygon in the rank-degree plane consisting of vertices $\{(0,0),(1,1),(3,2)\}$ and denote by $\mathscr{B}un_3^{2,\leq \lambda} \subseteq \mathscr{B}un_3^2$ the open substack consisting of vector bundles whose Harder-Narasimhan polygons lie below $\lambda$. Let $\mathscr{U} \subseteq \mathscr{B}un_3^2$ be the open substack constructed in \cite[\S 5]{dario-zhang}, i.e.,
\[
\mathscr{U}=\left\langle \mathcal{E} \in \mathscr{B}un_3^{2,\leq \lambda}:
\begin{matrix}
\text{If } \mathcal{E} \text{ fits into } 0 \to \mathcal{L} \to \mathcal{E} \to \mathcal{F} \to 0 \text{ or } 0 \to \mathcal{F} \to \mathcal{E} \to \mathcal{L} \to 0 \\
\text{for some } \mathcal{L} \in \mathscr{B}un_1^1 \text{ and } \mathcal{F} \in \mathscr{B}un_2^1, \text{ then } \mathcal{F} \text{ is } (1,0)\text{-stable.}
\end{matrix}
\right\rangle.
\]
It contains unstable vector bundles and admits a separated non-proper good moduli space by \cite[Theorem 5.5]{dario-zhang}. By Corollary \ref{cor:main} two geometric points $\mathcal{E}_1, \mathcal{E}_2 \in \mathscr{U}(\kappa)$ are identified in the good moduli space if and only if there exist filtrations $\mathcal{E}_1^\bullet,\mathcal{E}_2^\bullet$ of $\mathcal{E}_1,\mathcal{E}_2$ such that $\mathrm{gr}(\mathcal{E}_1^\bullet)=\mathrm{gr}(\mathcal{E}_2^\bullet) \in \mathscr{U}(\kappa)$. By \cite[Lemma 5.3]{dario-zhang} we have
\[
\mathrm{gr}(\mathcal{E}_1^\bullet)=\mathrm{gr}(\mathcal{E}_2^\bullet)=\mathcal{L} \oplus \mathcal{F} \text{ for some } \mathcal{L} \in \mathscr{B}un_1^1(\kappa) \text{ and } \mathcal{F} \in \mathscr{B}un_2^{1,s}(1,0)(\kappa),
\]
i.e., both $\mathcal{E}_1$ and $\mathcal{E}_2$ are extensions of $\mathcal{L}$ and $\mathcal{F}$, one way or the other. Conversely, any extension of such $\mathcal{L}$ and $\mathcal{F}$ defines a point in $\mathscr{U}$. Therefore two geometric points $\mathcal{E}_1, \mathcal{E}_2 \in \mathscr{U}(\kappa)$ are identified in the good moduli space if and only if there exist $\mathcal{L} \in \mathscr{B}un_1^1(\kappa)$ and $\mathcal{F} \in \mathscr{B}un_2^{1,s}(1,0)(\kappa)$ such that
\[
\mathcal{E}_i \in \mathrm{Ext}^1(\mathcal{L},\mathcal{F}) \text{ or } \mathcal{E}_i \in \mathrm{Ext}^1(\mathcal{F},\mathcal{L}).
\]
In either case, the points $\mathcal{E}_1,\mathcal{E}_2$ are identified with the closed point $\mathcal{L} \oplus \mathcal{F}$ in the good moduli space. In particular, we see that every unstable vector bundle in $\mathscr{U}$ is identified with a semistable vector bundle. 
\begin{defn}
Let $\mathscr{U} \subseteq \mathscr{B}un_n$ be an open substack that admits an adequate moduli space. An unstable vector bundle in $\mathscr{U}$ is called \emph{essential} if it is not identified with a semistable vector bundle in the adequate moduli space of $\mathscr{U}$.
\end{defn}
\begin{example}
There are examples and non-examples on both sides.
\begin{enumerate}
\item 
The open substack $\mathscr{U} \subseteq \mathscr{B}un_3^2$ constructed above admits a separated good moduli space and no unstable vector bundle in $\mathscr{U}$ is essential.
\item 
The open substack $\mathscr{B}un_n^{simple} \subseteq \mathscr{B}un_n$ of simple vector bundles admits a non-separated good moduli space (see \cite[Corollary 3.19]{dario-zhang}) and trivially every unstable vector bundle in $\mathscr{B}un_n^{simple}$ is essential.
\end{enumerate}
\end{example}
\begin{question}
We are wondering if there exists an open substack $\mathscr{U} \subseteq \mathscr{B}un_n$ containing essential unstable vector bundles that admits a proper adequate moduli space $\mathscr{U} \to U$. The answer is no if $\mathscr{B}un_n^{ss} \subseteq \mathscr{U}$. Indeed, let $f: M_n^{ss} \to U$ be the morphism induced by the universal property of adequate moduli spaces (see \cite[Theorem 3.12]{Alper2019-1})
\[
\begin{tikzcd}
\mathscr{B}un_n^{ss} \ar[r,hook,"\circ" marking] \ar[d,"\mathrm{ams}"'] & \mathscr{U} \ar[d,"\mathrm{ams}"] \\
M_n^{ss} \ar[r,"f"'] & U.
\end{tikzcd}
\]
The morphism $f$ is dominant as $\mathscr{B}un_n^{ss} \subseteq \mathscr{U}$ is open dense and the vertical morphisms are surjective by \cite[Theorem 5.3.1 (1)]{MR3272912}. Since $M_n^{ss}$ is proper and $U$ is separated, the image of $M_n^{ss}$ is closed in $U$ by \cite[\href{https://stacks.math.columbia.edu/tag/04NX}{Tag 04NX}]{stacks-project}. This shows that $f$ is surjective. A diagram-chasing implies that every point of $\mathscr{U}-\mathscr{B}un_n^{ss}$, i.e., every unstable vector bundle in $\mathscr{U}$, is identified with a point in $\mathscr{B}un_n^{ss}$ under the adequate moduli space $\mathscr{U} \to U$. 
\end{question}
\section{Relation to separatedness}
Given an algebraic stack, it happens quite often that there exist non-isomorphic families with isomorphic generic fibers. For this stack to admit a separated moduli space, the special fibers of those families must be identified. We make this phenomenon into a definition.
\begin{defn}\label{defn:non-sep}
Let $\mathscr{X}$ be an algebraic stack over a field $k$. Two geometric points $x_1 \ncong x_2 \in \mathscr{X}(\kappa)$ are said to be \emph{non-separated} if there exist
\begin{itemize}
\item 
a DVR $R$ with fraction field $K$ and residue field $\kappa$, and
\item 
two families $x_{1,R},x_{2,R} \in \mathscr{X}(R)$ such that $x_{1,K} \cong x_{2,K}$ and $x_{i,\kappa} \cong x_i$.
\end{itemize}
\end{defn}
\begin{rmk}
The notion of non-separation comes from the test space for separatedness (see \cite[2.A and 2.B]{MR3758902}), which we recall here. For any DVR $R$ with fraction field $K$, residue field $\kappa$ and uniformizer $\pi \in R$, the test space for separatedness is a quotient stack:
\[
\overline{\mathrm{ST}}_R:=[\mathrm{Spec}(R[x,y]/xy-\pi)/\mathbf{G}_m],
\]
where $x,y$ have $\mathbf{G}_m$-weights $1,-1$ respectively. Denote by $0:=\mathrm{B}\mathbf{G}_{m,\kappa} \in \overline{\mathrm{ST}}_R$ its unique closed point defined by the vanishing of both $x$ and $y$. In this terminology non-separated points are nothing but special fibers of a non-trivial family
\[
\mathrm{Spec}(R) \cup_{\mathrm{Spec}(K)} \mathrm{Spec}(R)=\overline{\mathrm{ST}}_R-\{0\} \to \mathscr{X}.
\]
This notion can be seen as an algebraic analogue of non-Hausdorff separation in the topological sense, that is, for any non-separated geometric points $x_1,x_2 \in \mathscr{X}(\kappa)$, any of their (Zariski) open neighbourhoods intersect. 

Indeed, if the non-separation of $x_1,x_2$ is realized by $x_{1,R},x_{2,R} \in \mathscr{X}(R)$, then for any open neighbourhood $\mathscr{U}_i \subseteq \mathscr{X}$ of $x_i$ we have $x_{i,R} \in \mathscr{U}_i(R)$ since $\mathscr{U}_i \subseteq \mathscr{X}$ is open. The assumption $x_{1,K} \cong x_{2,K} \in \mathscr{U}_1 \cap \mathscr{U}_2(K)$ implies that $\mathscr{U}_1 \cap \mathscr{U}_2 \neq \emptyset$. In this way we recover the usual non-Hausdorff separation.
\end{rmk}
\begin{lem}\label{lem:non-imply-qS}
Non-separated geometric points in an S-complete algebraic stack are quasi-S-equivalent.
\end{lem}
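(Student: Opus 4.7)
The plan is to leverage S-completeness to upgrade the given non-separation data into a single morphism from the test stack $\overline{\mathrm{ST}}_R$, and then read off two filtrations with the same associated graded point by restricting to the two coordinate axes in its closed fiber.

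First I would unpack the hypothesis: the non-separation of $x_1,x_2$ supplies a DVR $R$ with fraction field $K$, residue field $\kappa$, and two families $x_{1,R},x_{2,R} \in \mathscr{X}(R)$ whose generic fibers are isomorphic. Gluing along the common generic fiber produces a morphism $\mathrm{Spec}(R) \cup_{\mathrm{Spec}(K)} \mathrm{Spec}(R) = \overline{\mathrm{ST}}_R \setminus \{0\} \to \mathscr{X}$. By the very definition of S-completeness, this extends (uniquely) to a morphism $F: \overline{\mathrm{ST}}_R \to \mathscr{X}$.

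Next I would extract the two filtrations from $F$ by restricting to the coordinate axes of its closed fiber. The closed fiber of $\overline{\mathrm{ST}}_R \to \mathrm{Spec}(R)$ is $[\mathrm{Spec}(\kappa[x,y]/(xy))/\mathbf{G}_m]$, the union of two axes $V(y)$ and $V(x)$ meeting at the unique closed point $0$. Each axis is a copy of $[\mathbf{A}^1_\kappa/\mathbf{G}_{m,\kappa}]$ (with weights $+1$ on $V(y)$ and $-1$ on $V(x)$, the latter identified with the standard weight-one action via $t \mapsto t^{-1}$). The axis $V(y)$ meets the first chart $\mathrm{Spec}(R) \subset \overline{\mathrm{ST}}_R \setminus \{0\}$ at its closed point, which maps under $F$ to $x_{1,\kappa} \cong x_1$, so $F|_{V(y)}$ is a filtration $f_1$ of $x_1$. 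Symmetrically, $F|_{V(x)}$ yields a filtration $f_2$ of $x_2$. Both filtrations have associated graded point $f_1(0) = F(0) = f_2(0)$, which is exactly the condition defining quasi-S-equivalence.

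The only real bookkeeping, and the expected source of friction, is to correctly match the two $\mathrm{Spec}(R)$-charts inside $\overline{\mathrm{ST}}_R \setminus \{0\}$ with the families $x_{1,R}$ and $x_{2,R}$, and to handle the weight $-1$ convention on the second axis so that $F|_{V(x)}$ is recast as a genuine filtration with $f_2(1) \cong x_2$. This is a direct verification from the chart structure of $\mathrm{Spec}(R[x,y]/(xy-\pi))$ on the loci where $x$ or $y$ is invertible, and poses no conceptual difficulty.
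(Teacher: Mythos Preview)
Your proposal is correct and follows essentially the same argument as the paper: extend the non-separation data $\overline{\mathrm{ST}}_R\setminus\{0\}\to\mathscr{X}$ to a morphism $h:\overline{\mathrm{ST}}_R\to\mathscr{X}$ by S-completeness, then restrict to the two coordinate axes $x=0$ and $y=0$ of the closed fiber to obtain filtrations of $x_1,x_2$ with common associated graded point $h(0)$. The paper's proof is terser and does not spell out the weight-sign bookkeeping you mention, but the content is identical.
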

Therefore, non-separated points in a locally reductive algebraic stack with a separated adequate moduli space are identified by Theorem \ref{thm:equivalence}.
\begin{proof}
Let $\mathscr{X}$ be an S-complete algebraic stack and $x_1,x_2 \in \mathscr{X}(\kappa)$ be non-separated geometric points realized by two families $x_{1,R},x_{2,R} \in \mathscr{X}(R)$. These two families define a morphism $\overline{\mathrm{ST}}_R-\{0\} \to \mathscr{X}$ and it extends to $h: \overline{\mathrm{ST}}_R \to \mathscr{X}$ since $\mathscr{X}$ is S-complete. The quasi-S-equivalence of $x_1,x_2$ is realized by the restrictions $h|_{x=0}: [\mathrm{Spec}(\kappa[y])/\mathbf{G}_{m,\kappa}] \to \mathscr{X}$ and $h|_{y=0}: [\mathrm{Spec}(\kappa[x])/\mathbf{G}_{m,\kappa}] \to \mathscr{X}$.
\end{proof}
Together with Lemma \ref{lem:qS-recover} this shows
\begin{cor}
Non-separated geometric points in $\mathscr{B}un_n^{ss}$ are S-equivalent.
\end{cor}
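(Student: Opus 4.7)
The plan is to simply chain together the two lemmas just recalled, once we check that the ambient stack $\mathscr{B}un_n^{ss}$ satisfies the hypothesis of Lemma \ref{lem:non-imply-qS}. So the first step is to verify that $\mathscr{B}un_n^{ss}$ is S-complete. This is a standard fact: $\mathscr{B}un_n^{ss}$ admits a proper adequate moduli space (the classical moduli space of S-equivalence classes of semistable bundles), and by \cite[Proposition 3.48 (2)]{MR4665776} (already cited in the introduction) properness of the moduli space forces S-completeness of the stack. Alternatively, one can invoke Langton's semistable reduction theorem directly.

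Given S-completeness, the second step is immediate: apply Lemma \ref{lem:non-imply-qS} to conclude that any pair of non-separated geometric points $\mathcal{E}_1,\mathcal{E}_2 \in \mathscr{B}un_n^{ss}(\kappa)$ are quasi-S-equivalent in $\mathscr{B}un_n^{ss}$. Note it is important here that the quasi-S-equivalence is witnessed inside $\mathscr{B}un_n^{ss}$ itself (not merely inside the larger stack $\mathscr{C}oh_n$), because Lemma \ref{lem:qS-recover} requires the associated graded sheaves to live in $\mathscr{B}un_n^{ss}$. This is automatic from the proof of Lemma \ref{lem:non-imply-qS}, since the extension $h: \overline{\mathrm{ST}}_R \to \mathscr{B}un_n^{ss}$ produced there lands in $\mathscr{B}un_n^{ss}$, so the two filtrations $h|_{x=0}$ and $h|_{y=0}$ also land in $\mathscr{B}un_n^{ss}$ and hence have semistable associated graded sheaves.

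The third and final step is to invoke Lemma \ref{lem:qS-recover}, which states that quasi-S-equivalence and S-equivalence coincide on $\mathscr{B}un_n^{ss}(\kappa)$. This converts the quasi-S-equivalence from step two into the desired S-equivalence and completes the proof. There is no real obstacle here; the only point that needs any care is the (standard) S-completeness of $\mathscr{B}un_n^{ss}$, and even this could be avoided by arguing directly: a morphism $\overline{\mathrm{ST}}_R \setminus \{0\} \to \mathscr{B}un_n^{ss}$ is the same as a morphism $\mathrm{Spec}(R) \cup_{\mathrm{Spec}(K)} \mathrm{Spec}(R) \to \mathscr{B}un_n^{ss}$, and Langton's theorem fills in the closed point in both factors simultaneously.
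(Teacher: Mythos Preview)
Your proof is correct and follows exactly the paper's approach: the corollary is deduced immediately from Lemma~\ref{lem:non-imply-qS} combined with Lemma~\ref{lem:qS-recover}, once S-completeness of $\mathscr{B}un_n^{ss}$ is granted. Your explicit justification of S-completeness via the proper adequate moduli space and \cite[Proposition~3.48~(2)]{MR4665776} is a welcome elaboration that the paper leaves implicit.
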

The converse is not true, i.e., not all S-equivalent semistable vector bundles are non-separated, they need have opposite filtrations, see Corollary \ref{cor:nonsep=id} (1) later.

Here are some basic properties of non-separated coherent sheaves, which can be compared with \cite[Lemma 3.16]{dario-zhang}.
\begin{lem}\label{lem:non-separated}
For any non-separated geometric points $\mathcal{E}_1 \ncong \mathcal{E}_2 \in \mathscr{C}oh_n(\kappa)$, there exist
\begin{enumerate}
\item
an isomorphism $\det(\mathcal{E}_1) \cong \det(\mathcal{E}_2)$.
\item
non-zero morphisms $h_{ij}: \mathcal{E}_i \to \mathcal{E}_j$ such that $h_{ij} \circ h_{ji}=0$.
\end{enumerate}
\end{lem}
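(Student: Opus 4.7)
The plan is to unpack the definition of non-separation and manipulate the generic isomorphism $\pi$-adically. By hypothesis, there exist a DVR $(R,\pi)$ with residue field $\kappa$, $R$-flat families $\mathcal{E}_{1,R},\mathcal{E}_{2,R} \in \mathscr{C}oh_n(R)$ over $C_R := C \times_k \mathrm{Spec}(R)$, and an isomorphism $\phi_K: \mathcal{E}_{1,K} \xrightarrow{\sim} \mathcal{E}_{2,K}$ whose reductions modulo $\pi$ recover $\mathcal{E}_1,\mathcal{E}_2$. Both parts follow from careful bookkeeping on $\phi_K$ and its inverse.

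For (1), compose the classifying maps $\mathrm{Spec}(R) \to \mathscr{C}oh_n$ with the determinant morphism $\det : \mathscr{C}oh_n \to \mathrm{Pic}_C$. This produces two $R$-points of the (separated) Picard scheme that agree at the generic point, hence must agree at the closed point as well, yielding $\det(\mathcal{E}_1) \cong \det(\mathcal{E}_2)$. Equivalently, one can argue directly that any line bundle on the regular surface $C_R$ trivial on $C_K$ comes from a divisor supported on the principal divisor $C = \mathrm{div}(\pi)$ and is therefore trivial on all of $C_R$.

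For (2), I would introduce the finitely generated $R$-module $H := \mathrm{Hom}_{C_R}(\mathcal{E}_{1,R},\mathcal{E}_{2,R})$. Flat base change gives $H \otimes_R K \cong \mathrm{Hom}(\mathcal{E}_{1,K},\mathcal{E}_{2,K})$, and the $R$-flatness of $\mathcal{E}_{2,R}$ (which makes multiplication by $\pi$ injective) yields the crucial injection $H/\pi H \hookrightarrow \mathrm{Hom}(\mathcal{E}_1,\mathcal{E}_2)$. Let $m \in \mathbf{Z}$ be the smallest integer with $\tilde{h}_{12} := \pi^m \phi_K \in H$, and let $h_{12}$ denote its reduction modulo $\pi$; the injection above together with minimality of $m$ forces $h_{12} \neq 0$. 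Repeating the construction for $\phi_K^{-1}$ gives an integer $n$ and a nonzero $h_{21}: \mathcal{E}_2 \to \mathcal{E}_1$.

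It remains to check $h_{12} \circ h_{21} = 0$ and $h_{21} \circ h_{12} = 0$. Over $K$ both compositions equal $\pi^{m+n}\cdot \mathrm{id}$ on the respective $\mathcal{E}_{i,R}$, and they automatically lie in $\mathrm{End}_R(\mathcal{E}_{i,R})$; $R$-flatness of $\mathcal{E}_{i,R}$ rules out $m+n<0$. The case $m+n=0$ is excluded by the hypothesis $\mathcal{E}_1 \ncong \mathcal{E}_2$: it would make $\tilde{h}_{12}$ and $\tilde{h}_{21}$ mutually inverse over $R$ and so their reductions would realize an isomorphism $\mathcal{E}_1 \cong \mathcal{E}_2$. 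Hence $m+n \geq 1$ and both compositions vanish mod $\pi$. The principal technical subtlety is establishing the base-change identity and the injection $H/\pi H \hookrightarrow \mathrm{Hom}(\mathcal{E}_1,\mathcal{E}_2)$, both of which rest squarely on the $R$-flatness built into $\mathscr{C}oh_n(R)$; once these are in hand, the minimality argument is essentially mechanical.
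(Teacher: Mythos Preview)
Your argument is correct and follows essentially the same route as the paper's proof: part (1) via separatedness of $\mathrm{Pic}$, and part (2) by taking the minimal $\pi$-powers needed to extend $\phi_K$ and $\phi_K^{-1}$ over $R$, so that the two compositions equal $\pi^{m+n}\cdot\mathrm{id}$ with $m+n>0$. The only cosmetic difference is packaging: you phrase the minimality and non-vanishing via the injection $H/\pi H \hookrightarrow \mathrm{Hom}(\mathcal{E}_1,\mathcal{E}_2)$, whereas the paper identifies both $\mathcal{G}_{i,R}$ as lattices inside $\mathcal{G}_{1,K}$ and reads off the same data from the containments $\mathcal{G}_{1,R}\subseteq \pi^{-a}\mathcal{G}_{2,R}$, $\mathcal{G}_{2,R}\subseteq \pi^{-b}\mathcal{G}_{1,R}$.
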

\begin{proof}
Suppose the non-separation of $\mathcal{E}_1, \mathcal{E}_2$ is realized by $\mathcal{G}_{1,R},\mathcal{G}_{2,R} \in \mathscr{C}oh_n(R)$ for some DVR $R$ with fraction field $K$ and residue field $\kappa$.

Since the Picard variety $\mathrm{Pic}$ is separated, $\det(\mathcal{G}_{1,K}) \cong \det(\mathcal{G}_{2,K})$ implies that $\det(\mathcal{G}_{1,R}) \cong \det(\mathcal{G}_{2,R})$. In particular, $\det(\mathcal{G}_{1,\kappa}) \cong \det(\mathcal{G}_{2,\kappa})$, i.e., $\det(\mathcal{E}_1) \cong \det(\mathcal{E}_2)$. This proves (1). For (2) we fix an isomorphism $\lambda_K: \mathcal{G}_{1,K} \xrightarrow{\sim} \mathcal{G}_{2,K}$ and use it to identify $\mathcal{G}_{1,K}$ and $\mathcal{G}_{2,K}$. Regard both $\mathcal{G}_{1,R}$ and $\mathcal{G}_{2,R}$ as subsheaves of $\mathcal{G}_{1,K} \cong \mathcal{G}_{2,K}$. Let $\pi \in R$ be a uniformizer. Let $a \in \mathbf{N}$ (resp., $b \in \mathbf{N}$) be the minimal integer such that $\mathcal{G}_{1,R} \subseteq \pi^{-a}\mathcal{G}_{2,R}$ (resp., $\mathcal{G}_{2,R} \subseteq \pi^{-b}\mathcal{G}_{1,R}$). Let $h_{12,R}: \mathcal{G}_{1,R} \to \mathcal{G}_{2,R}$ be the following composition
\[
\begin{tikzcd}
\mathcal{G}_{1,R} \ar[d,equal] \\
\mathcal{G}_{1,R} \cap \pi^{-a}\mathcal{G}_{2,R} \ar[r,"\pi"] & \mathcal{G}_{1,R} \cap \pi^{-(a-1)}\mathcal{G}_{2,R} \ar[r,"\pi"] & \cdots \ar[r,"\pi"] & \mathcal{G}_{1,R} \cap \pi^{-1}\mathcal{G}_{2,R} \ar[d,"\pi"] \\
& & & \mathcal{G}_{1,R} \cap \mathcal{G}_{2,R} \ar[d,"\text{icl}"] \\
\pi^{-b}\mathcal{G}_{1,R} \cap \mathcal{G}_{2,R} \ar[d,equal] & \pi^{-(b-1)}\mathcal{G}_{1,R} \cap \mathcal{G}_{2,R} \ar[l,"\text{icl}"] & \cdots \ar[l,"\text{icl}"] & \pi^{-1}\mathcal{G}_{1,R} \cap \mathcal{G}_{2,R} \ar[l,"\text{icl}"] \\
\mathcal{G}_{2,R}
\end{tikzcd}
\]
Reversing all arrows in this composition and interchanging $\pi$ and icl gives an inverse morphism $h_{21,R}: \mathcal{G}_{2,R} \to \mathcal{G}_{1,R}$ such that 
\[
h_{ij,R} \circ h_{ji,R}=\pi^{a+b}.
\]
The condition $\mathcal{E}_1 \ncong \mathcal{E}_2$ then implies that $a+b>0$, so
\[
h_{ij,\kappa} \circ h_{ji,\kappa}=0.
\]
Both morphisms $h_{ij,\kappa}$ are non-zero because $a$ and $b$ are also the minimal integers such that $\pi^a\lambda_K$ and $\pi^b\lambda_K^{-1}$ define morphisms $\mathcal{G}_{1,R} \to \mathcal{G}_{2,R}$ and $\mathcal{G}_{2,R} \to \mathcal{G}_{1,R}$ extending $\lambda_K$ and $\lambda_K^{-1}$ respectively.
\end{proof}
\begin{rmk}
If $k=\mathbf{C}$ and $\mathcal{E}_1 \ncong \mathcal{E}_2 \in \mathscr{B}un_n(\kappa)$, then Lemma \ref{lem:non-separated} (2) is proved in \cite[Proposition 2]{MR529671} using analytic methods. Here we give an alternative argument, basically coming from \cite[Proof of Proposition 6.5]{MR3013028}, to show that there exists a non-zero morphism $h_{ij}: \mathcal{E}_i \to \mathcal{E}_j$ for $i \neq j$. Suppose the non-separation of $\mathcal{E}_1, \mathcal{E}_2$ is realized by $\mathcal{G}_{1,R},\mathcal{G}_{2,R} \in \mathscr{B}un_n(R)$ for some DVR $R$ with fraction field $K$ and residue field $\kappa$. Consider the Cartesian diagram
\[
\begin{tikzcd}
C \times \mathrm{Spec}(R) \ar[r,"q"] \ar[d,"p"'] & C \times \kappa \ar[d] \\
\mathrm{Spec}(R) \ar[r] & \mathrm{Spec}(\kappa) \arrow[ul,phantom,"\lrcorner"],
\end{tikzcd}
\]
and the coherent sheaf $\mathbf{R}^1p_*(\mathcal{G}_{i,R} \otimes \mathcal{G}_{j,R}^\vee \otimes q^*\omega_C)$ over $\mathrm{Spec}(R)$. Its generic fiber
\[
H^1(C_K,\mathcal{G}_{i,K} \otimes \mathcal{G}_{j,K}^\vee \otimes q_K^*\omega_C) \cong H^0(C_K,\mathcal{G}_{i,K}^\vee \otimes \mathcal{G}_{j,K})^\vee \cong \mathrm{Hom}(\mathcal{G}_{i,K},\mathcal{G}_{j,K})^\vee
\]
is non-zero, so is its special fiber by upper semi-continuity. Then $\mathrm{Hom}(\mathcal{E}_i,\mathcal{E}_j)^\vee \cong \mathrm{Hom}(\mathcal{G}_{i,\kappa},\mathcal{G}_{j,\kappa})^\vee \neq 0$. Thus there exists a non-zero morphism $\mathcal{E}_i \to \mathcal{E}_j$.
\end{rmk}
\begin{example}
Non-separation phenomena do not occur in $\mathscr{B}un_n^{s}$. There are several ways to see this:
\begin{itemize}
\item
For any geometric points $\mathcal{E}_1 \ncong \mathcal{E}_2 \in \mathscr{B}un_n^{s}(\kappa)$, we have $\mathrm{Hom}(\mathcal{E}_i,\mathcal{E}_j)=0$ and can apply Lemma \ref{lem:non-separated} (2).
\item
Any families $\mathcal{G}_{1,R},\mathcal{G}_{2,R} \in \mathscr{B}un_n^{s}(R)$ coinciding on the generic fiber must be isomorphic on the special fiber by Langton's theorem A (see \cite{MR0364255}).
\end{itemize}
\end{example}
To describe non-separated points in arbitrary algebraic stacks we generalize the notion of opposite filtrations introduced in \cite[Definition 3.15]{dario-zhang}.
\begin{defn}\label{defn:opp-fil}
Let $\mathscr{X}$ be an algebraic stack over a field $k$. Two geometric points $x_1,x_2 \in \mathscr{X}(\kappa)$ are said to have \emph{opposite filtrations} if there exists a morphism
\[
f: [\mathrm{Spec}(\kappa[x,y]/xy)/\mathbf{G}_{m,\kappa}] \to \mathscr{X},
\]
where $x,y$ have $\mathbf{G}_m$-weights $1,-1$ respectively, 
such that 
\[
f(1,0) \cong x_1 \text{ and } f(0,1) \cong x_2.
\]
\end{defn}
\begin{example}[Opposite filtrations for coherent sheaves]\label{eg:opp-in-Coh}
Let $\mathscr{U} \subseteq \mathscr{C}oh_n$ be an open substack. By restricting to the irreducible components $x=0$ and $y=0$ respectively and applying the Rees construction, a morphism $[\mathrm{Spec}(\kappa[x,y]/xy)/\mathbf{G}_{m,\kappa}] \to \mathscr{U}$ is the data of two geometric points $\mathcal{E},\mathcal{E}' \in \mathscr{U}(\kappa)$ together with two finite $\mathbf{Z}$-graded filtrations $\mathcal{E}^\bullet,\mathcal{E}'_\bullet$ of subsheaves (which are called opposite filtrations in \cite[Definition 3.15]{dario-zhang})
\begin{gather*}
\mathcal{E}^\bullet: 0 \subseteq \cdots \subseteq \mathcal{E}^{i-1} \subseteq \mathcal{E}^{i} \subseteq \mathcal{E}^{i+1} \subseteq \cdots \subseteq \mathcal{E} \\ 
\mathcal{E}'_\bullet: \mathcal{E}' \supseteq \cdots \supseteq \mathcal{E}'_{i-1} \supseteq \mathcal{E}'_{i} \supseteq \mathcal{E}'_{i+1} \supseteq \cdots \supseteq 0
\end{gather*}
such that $\mathcal{E}^i/\mathcal{E}^{i-1} \cong \mathcal{E}'_{i}/\mathcal{E}'_{i+1}$ for all $i \in \mathbf{Z}$, and $\mathrm{gr}(\mathcal{E}^\bullet)=\mathrm{gr}(\mathcal{E}'_\bullet) \in \mathscr{U}(\kappa)$ (this condition is superfluous if $\mathscr{U}=\mathscr{C}oh_n$).
\end{example}
It turns out that, for any geometric points in an S-complete algebraic stacks, having opposite filtrations is equivalent to being non-separated. For this we need the following extension result, whose proof employs deformation theory and the powerful coherent completeness.
\begin{lem}\label{lem:res-equiv}
Let $\mathscr{X}$ be a smooth algebraic stack over a field $k$. If $\mathscr{X}$ either has affine stabilizers, or has quasi-affine diagonal, or is Deligne-Mumford, then for any field $\kappa$, the restriction morphism
\[
\mathrm{res}: \mathrm{Map}(\overline{\mathrm{ST}}_{R},\mathscr{X}) \xrightarrow{\pi=0} \mathrm{Map}([\mathrm{Spec}(\kappa[x,y]/xy)/\mathbf{G}_{m,\kappa}],\mathscr{X}).
\]
is surjective, where $R:=\kappa[[\pi]]$.
\end{lem}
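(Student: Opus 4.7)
Set $\mathscr{T}_n := [\mathrm{Spec}(R[x,y]/(xy-\pi, \pi^n))/\mathbf{G}_{m,\kappa}]$ for $n \geq 1$, so that $\mathscr{T}_1$ is the source of the given morphism $g$, each $\mathscr{T}_n \hookrightarrow \mathscr{T}_{n+1}$ is a square-zero closed immersion (for $n \geq 1$, since $\pi^n \cdot \pi^n \in (\pi^{n+1})$), and $\overline{\mathrm{ST}}_R$ is coherently complete along $\mathscr{T}_1$: the ring $R[x,y]/(xy-\pi)$ is $\pi$-adically complete, so this follows from the equivariant Grothendieck existence theorem for linearly reductive group actions (in the style of Alper--Hall--Rydh). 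The plan has two steps: first construct a compatible tower of lifts $g_n : \mathscr{T}_n \to \mathscr{X}$ with $g_1 = g$; then algebraize the tower to a single morphism $\overline{\mathrm{ST}}_R \to \mathscr{X}$.

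For the inductive step, standard deformation theory for morphisms to an algebraic stack locates the obstruction to extending $g_n$ across the square-zero thickening $\mathscr{T}_n \hookrightarrow \mathscr{T}_{n+1}$ with ideal sheaf $I_n$ in $\mathrm{Ext}^1_{\mathscr{T}_n}(Lg_n^*L_{\mathscr{X}/k}, I_n)$. Smoothness of $\mathscr{X}$ over $k$ places $L_{\mathscr{X}/k}$ in degree zero as a locally free sheaf, collapsing this Ext group to a first coherent cohomology group on $\mathscr{T}_n$. Because $\mathscr{T}_n$ is the quotient of an affine scheme by $\mathbf{G}_{m,\kappa}$ and $\mathbf{G}_{m,\kappa}$ is linearly reductive, higher quasi-coherent cohomology on $\mathscr{T}_n$ vanishes, so the obstruction vanishes and a lift exists at each stage. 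Since lifts in the square-zero case form a torsor under a vanishing group, one can choose them compatibly, producing a coherent tower $\{g_n\}$.

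For the algebraization step, the hypothesis on $\mathscr{X}$ (affine stabilizers, quasi-affine diagonal, or Deligne--Mumford) is exactly one for which Tannaka duality for algebraic stacks, as established by Hall--Rydh, applies. Combined with the coherent completeness of $\overline{\mathrm{ST}}_R$ along $\mathscr{T}_1$, this gives an equivalence
\[
\mathrm{Map}(\overline{\mathrm{ST}}_R, \mathscr{X}) \xrightarrow{\sim} \varprojlim_{n \geq 1} \mathrm{Map}(\mathscr{T}_n, \mathscr{X}),
\]
and the tower $\{g_n\}$ assembles into a single morphism $f : \overline{\mathrm{ST}}_R \to \mathscr{X}$ restricting to $g$ on the vanishing locus of $\pi$.

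The main obstacle is handling deformation theory and algebraization on a stack rather than on a scheme. The source $\mathscr{T}_n$ is not affine, and the whole argument rests on converting the lifting problem on $\mathscr{T}_n$ into an equivariant cohomological question on an affine presentation, where linear reductivity of $\mathbf{G}_m$ supplies the needed vanishing; smoothness of $\mathscr{X}$ is what controls the obstruction class; and the three listed alternatives on $\mathscr{X}$ are precisely those that yield a Tannakian description of $\mathrm{Map}(-, \mathscr{X})$. Verifying that these inputs combine cleanly is the technical heart of the argument.
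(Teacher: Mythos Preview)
Your argument is correct and follows the same two-step strategy as the paper: first lift the given map across the square-zero thickenings $\mathscr{T}_n \hookrightarrow \mathscr{T}_{n+1}$ using smoothness of $\mathscr{X}$ together with the vanishing of higher coherent cohomology on a quotient of an affine by the linearly reductive group $\mathbf{G}_m$, then algebraize the resulting tower via coherent completeness of $\overline{\mathrm{ST}}_R$ and Tannaka duality under the stated hypotheses on $\mathscr{X}$. The paper packages the second step as a single citation to Alper--Hall--Rydh, but the content is identical.
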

\begin{proof}
Let $A:=R[x,y]/xy-\pi$. It is a complete noetherian ring with respect to the $\pi$-adic topology. For any morphism
\[
f_1: [\mathrm{Spec}(\kappa[x,y]/xy)/\mathbf{G}_{m,\kappa}]=[\mathrm{Spec}(A/\pi)/\mathbf{G}_{m,\kappa}] \to \mathscr{X},
\]
we want to construct a morphism $f: \overline{\mathrm{ST}}_{R} \to \mathscr{X}$ such that $f|_{xy=0}=f_1$. Since $\mathscr{X}$ is smooth, any morphism $\mathrm{Spec}(A/\pi) \to \mathscr{X}$ can be lifted infinitesimally to $\mathrm{Spec}(A/\pi^i)$ for all $i>1$. The obstruction to the existence of a $\mathbf{G}_m$-equivariant lifting lies in
\[
H^1\left(\mathrm{B}\mathbf{G}_{m,\kappa},f_{i-1}^*\mathbf{T}_{\mathscr{X}}|_{\mathrm{B}\mathbf{G}_{m,\kappa}} \otimes (\pi^{i-1})/(\pi^i)\right)=0.
\]
Thus there exists a compatible family of morphisms $f_i: [\mathrm{Spec}(A/\pi^i)/\mathbf{G}_{m,\kappa}] \to \mathscr{X}$. By the assumption on $\mathscr{X}$, the coherent completeness (see \cite[Corollary 1.5]{MR4009673}) yields a morphism 
\[
f: [\mathrm{Spec}(A)/\mathbf{G}_{m,\kappa}]=\overline{\mathrm{ST}}_R \to \mathscr{X} 
\]
and $f|_{xy=0}=f_1$. This shows the surjectivity.
\end{proof}
\begin{thm}\label{thm:equivalence-non-opp}
Let $\mathscr{X}$ be a smooth S-complete algebraic stack over a field $k$. Suppose $\mathscr{X}$ either has affine stabilizers, or has quasi-affine diagonal, or is Deligne-Mumford. Then for any geometric points $x_1,x_2 \in \mathscr{X}(\kappa)$, the following are equivalent:
\begin{enumerate}
\item
The points $x_1,x_2$ are non-separated.
\item
The points $x_1,x_2$ have opposite filtrations.
\end{enumerate}
\end{thm}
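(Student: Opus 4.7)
The plan is to prove both implications by passing between morphisms out of $\overline{\mathrm{ST}}_R$ and its central fibre $[\mathrm{Spec}(\kappa[x,y]/xy)/\mathbf{G}_{m,\kappa}]$, using S-completeness in one direction and Lemma \ref{lem:res-equiv} in the other.

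For (1) $\Rightarrow$ (2), suppose the non-separation of $x_1, x_2$ is realised by two families $x_{1,R}, x_{2,R} \in \mathscr{X}(R)$ for some DVR $R$ with residue field $\kappa$. As observed in the remark following Definition \ref{defn:non-sep}, these families glue along $\mathrm{Spec}(K)$ to a morphism $\overline{\mathrm{ST}}_R \setminus \{0\} \to \mathscr{X}$, and S-completeness of $\mathscr{X}$ extends it uniquely to a morphism $h : \overline{\mathrm{ST}}_R \to \mathscr{X}$. Restricting $h$ along the closed immersion cut out by $\pi = 0$ yields a morphism $[\mathrm{Spec}(\kappa[x,y]/xy)/\mathbf{G}_{m,\kappa}] \to \mathscr{X}$. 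The points $(1,0)$ and $(0,1)$ of this central fibre sit inside the open charts $\{x \neq 0\}$ and $\{y \neq 0\}$, each of which was identified with $\mathrm{Spec}(R)$ via the $\mathbf{G}_m$-action; so they map to the closed fibres $x_{1,\kappa} \cong x_1$ and $x_{2,\kappa} \cong x_2$, exhibiting opposite filtrations.

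For (2) $\Rightarrow$ (1), given a morphism $f : [\mathrm{Spec}(\kappa[x,y]/xy)/\mathbf{G}_{m,\kappa}] \to \mathscr{X}$ realising opposite filtrations, Lemma \ref{lem:res-equiv}, applied with $R = \kappa[[\pi]]$, produces a lift $\tilde f : \overline{\mathrm{ST}}_R \to \mathscr{X}$ with $\tilde f|_{xy = 0} = f$. The open complement $\overline{\mathrm{ST}}_R \setminus \{0\}$ is covered by the two $\mathbf{G}_m$-invariant charts $\{x \neq 0\}$ and $\{y \neq 0\}$, each isomorphic to $\mathrm{Spec}(R)$, glued along $\mathrm{Spec}(K)$. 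Restricting $\tilde f$ to these two charts produces families $x_{1,R}, x_{2,R} \in \mathscr{X}(R)$ with isomorphic generic fibres (coming from the common restriction over $\mathrm{Spec}(K)$) and with closed fibres $x_1, x_2$ (coming from the points $(1,0)$ and $(0,1)$ of the central fibre). Assuming $x_1 \ncong x_2$ this is exactly the required non-separation; if $x_1 \cong x_2$ the claim is vacuous.

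The main technical content is entirely in Lemma \ref{lem:res-equiv}, where smoothness, the diagonal hypothesis (so that coherent completeness applies), and the vanishing of the obstruction group $H^1(\mathrm{B}\mathbf{G}_{m,\kappa}, -)$ combine to promote the central-fibre morphism to a formal deformation and then algebraise it across $\overline{\mathrm{ST}}_R$. The S-completeness hypothesis on $\mathscr{X}$ is only used in the direction (1) $\Rightarrow$ (2), and the two directions are otherwise formally symmetric via the central-fibre/$\overline{\mathrm{ST}}_R$ correspondence.
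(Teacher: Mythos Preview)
Your proof is correct and follows essentially the same approach as the paper: for (1) $\Rightarrow$ (2) you use S-completeness to extend to $\overline{\mathrm{ST}}_R$ and restrict to the central fibre $\{xy=0\}$, and for (2) $\Rightarrow$ (1) you invoke Lemma \ref{lem:res-equiv} to lift to $\overline{\mathrm{ST}}_R$ and restrict to the two charts $\{x\neq 0\}$, $\{y\neq 0\}$. Your closing observations---that S-completeness is used only for (1) $\Rightarrow$ (2) and that the technical weight sits in Lemma \ref{lem:res-equiv}---match the paper's parenthetical remarks exactly.
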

\begin{proof}
(1) $\Rightarrow$ (2) (using S-completeness only): Keep the notations in the proof of Lemma \ref{lem:non-imply-qS}. The restriction $h|_{xy=0}: [\mathrm{Spec}(\kappa[x,y]/xy)/\mathbf{G}_{m,\kappa}] \to \mathscr{X}$ gives rise to opposite filtrations of $x_1,x_2$.

(2) $\Rightarrow$ (1) (without using S-completeness): For any morphism 
\[
f: [\mathrm{Spec}(\kappa[x,y]/xy)/\mathbf{G}_{m,\kappa}] \to \mathscr{X}
\]
such that $f^\circ(1,0) \cong x_1$ and $f^\circ(0,1) \cong x_2$, let $\hat{f}: \overline{\mathrm{ST}}_R \to \mathscr{X}$ be an extension of $f$ by Lemma \ref{lem:res-equiv}, where $R:=\kappa[[\pi]]$. The restrictions $\hat{f}|_{x \neq 0},\hat{f}|_{y \neq 0}: \mathrm{Spec}(R) \to \mathscr{X}$ realize the non-separation of $x_1,x_2$.
\end{proof}
This reproves \cite[Corollary 3.23]{dario-zhang}.
\begin{cor}\label{cor:non=opp-in-Coh}
Two geometric points in $\mathscr{C}oh_n$ (or $\mathscr{B}un_n^{ss}$) are non-separated if and only if they have opposite filtrations.
\end{cor}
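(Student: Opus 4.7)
The plan is to deduce the corollary directly from Theorem~\ref{thm:equivalence-non-opp}, so the substance of the proof reduces to checking that both $\mathscr{C}oh_n$ and $\mathscr{B}un_n^{ss}$ satisfy the three hypotheses of that theorem, namely: smoothness, affine stabilizers, and S-completeness. Once these are verified, the theorem gives the equivalence ``non-separated $\Leftrightarrow$ opposite filtrations'' at the level of morphisms from $[\mathrm{Spec}(\kappa[x,y]/xy)/\mathbf{G}_{m,\kappa}]$, and Example~\ref{eg:opp-in-Coh} translates the latter into the concrete language of pairs of filtrations of subsheaves with matching associated gradeds.

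First I would record smoothness: for a coherent sheaf $\mathcal{E}$ on a smooth projective curve $C$, the obstruction to deformations lives in $\mathrm{Ext}^2_C(\mathcal{E},\mathcal{E})$, which vanishes because $C$ is one-dimensional; hence $\mathscr{C}oh_n$ is smooth, and the open substack $\mathscr{B}un_n^{ss} \subseteq \mathscr{C}oh_n$ inherits smoothness. Next, affine stabilizers: the automorphism group of any coherent sheaf $\mathcal{E}$ is the unit group of the finite-dimensional $k$-algebra $\mathrm{End}(\mathcal{E})$, which is an affine algebraic group. So the hypothesis on the diagonal of Theorem~\ref{thm:equivalence-non-opp} holds in the ``affine stabilizers'' form for both stacks.

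The main step is S-completeness. I would invoke the description of S-completeness as a valuative criterion: given a DVR $R$ with fraction field $K$ and residue field $\kappa$, any morphism $\overline{\mathrm{ST}}_R \setminus \{0\} \to \mathscr{C}oh_n$ must extend over $0$. Unwinding the definition of $\overline{\mathrm{ST}}_R$, such data amount to two families $\mathcal{G}_1,\mathcal{G}_2 \in \mathscr{C}oh_n(R)$ together with an isomorphism of their generic fibers; the extension problem is precisely the existence of a common $R[x,y]/(xy-\pi)$-structure, which one produces by the same saturation/Rees-type construction used in the proof of Lemma~\ref{lem:non-separated} (it is really Langton's construction in the coherent-sheaf setting and is well known to give S-completeness of $\mathscr{C}oh_n$). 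For $\mathscr{B}un_n^{ss}$ the extension remains in $\mathscr{B}un_n^{ss}$ by Langton's theorem on the semistable reduction of torsion-free sheaves over a curve, so $\mathscr{B}un_n^{ss}$ is also S-complete.

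With the three hypotheses verified, Theorem~\ref{thm:equivalence-non-opp} applies to both stacks and yields ``non-separated $\Leftrightarrow$ opposite filtrations'' as morphisms from $[\mathrm{Spec}(\kappa[x,y]/xy)/\mathbf{G}_{m,\kappa}]$. Finally, Example~\ref{eg:opp-in-Coh} (specialized to $\mathscr{U}=\mathscr{C}oh_n$, respectively $\mathscr{U}=\mathscr{B}un_n^{ss}$) rephrases such morphisms exactly as pairs of opposite filtrations in the sense of \cite[Definition 3.15]{dario-zhang}, closing the loop. The only step with any subtlety is S-completeness for $\mathscr{B}un_n^{ss}$, where one must confirm that the extension produced for $\mathscr{C}oh_n$ lands back in the semistable open locus; this is precisely the content of Langton's theorem.
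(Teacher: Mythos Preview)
Your proposal is correct and matches the paper's approach: the corollary is stated with no separate proof and is meant as a direct application of Theorem~\ref{thm:equivalence-non-opp} once smoothness, affine stabilizers, and S-completeness are verified for $\mathscr{C}oh_n$ and $\mathscr{B}un_n^{ss}$, which is exactly what you do. The paper takes S-completeness of these stacks as known (it is established in \cite{MR4665776} and \cite{dario-zhang}) rather than re-deriving it, whereas you sketch the Langton-type argument, but this is the same underlying content.
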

\begin{rmk}
There is a bundle-theoretical construction of opposite filtrations for non-separated points in $\mathscr{B}un_n^{ss}$, using non-zero morphisms between them. 

Indeed, for any non-separated  points $\mathcal{E}_1 \ncong \mathcal{E}_2 \in \mathscr{B}un_n^{ss}(\kappa)$, let $h_{ij}: \mathcal{E}_i \to \mathcal{E}_j$ be a non-zero morphism such that $h_{ij} \circ h_{ji}=0$ (see Lemma \ref{lem:non-separated} (2)). By the semistability of $\mathcal{E}_1,\mathcal{E}_2$ we have a commutative diagram with exact rows
\[
\begin{tikzcd}
0 \ar[r] & \ker(h_{12}) \ar[r] & \mathcal{E}_1 \ar[d,"h_{12}",shift left] \ar[r] & \mathrm{Im}(h_{12}) \ar[r] \ar[d,"\cong"] & 0 \\
0 & \mathrm{Im}(h_{21}) \ar[l] \ar[u,"\cong"] & \mathcal{E}_2 \ar[l] \ar[u,shift left,"h_{21}"] & \ker(h_{21}) \ar[l] & 0. \ar[l]
\end{tikzcd}
\]
This shows that $\mathcal{E}_1,\mathcal{E}_2$ have opposite filtrations by Example \ref{eg:opp-in-Coh}. This is exactly how Langton showed in \cite{MR0364255} that semistable limits are S-equivalent.
\end{rmk}
It has been observed in \cite[Remark 3.40]{MR4665776} that S-completeness need not be preserved when passing to open substacks. However, there is a sufficient and necessary condition for open substacks of an S-complete algebraic stack to be S-complete.
\begin{defn}[Axis-complete]\label{defn:axis-comp}
A morphism $f: \mathscr{X} \to \mathscr{Y}$ of locally noetherian algebraic
stacks over a field $k$ is \emph{axis-complete} if for any field $\kappa$ and any commutative diagram
\[
\begin{tikzcd}
\left[\mathrm{Spec}(\kappa[x,y]/xy)/\mathbf{G}_{m,\kappa}\right]-\{0\} \ar[r] \ar[d,hook] & \mathscr{X} \ar[d,"f"] \\
\left[\mathrm{Spec}(\kappa[x,y]/xy)/\mathbf{G}_{m,\kappa}\right] \ar[r] \ar[ur,dashed] & \mathscr{Y}
\end{tikzcd} 
\]
of solid arrows, there exists a unique dotted arrow filling in.
\end{defn}
For any algebraic stack $\mathscr{X}$ we have a diagram
\[
\begin{tikzcd}
\mathrm{Map}(\overline{\mathrm{ST}}_R,\mathscr{X}) \ar[r,"\mathrm{res}"] \ar[d,"\mathrm{res}"'] & \mathrm{Map}([\mathrm{Spec}(\kappa[x,y]/xy)/\mathbf{G}_m],\mathscr{X}) \ar[d,"\mathrm{res}"] \\
\mathrm{Map}(\overline{\mathrm{ST}}_R-\{0\},\mathscr{X}) \ar[r,dashed,"?"] & \mathrm{Map}([\mathrm{Spec}(\kappa[x,y]/xy)/\mathbf{G}_m]-\{0\},\mathscr{X}).
\end{tikzcd}
\]
One obstruction to relate S-completeness (LHS) and axis-completeness (RHS) is that we do not have a ``restriction'' morphism downstairs. If the left vertical arrow has a section (e.g. if $\mathscr{X}$ is S-complete), then we can use this to define a morphism downstairs. This is also the reason why we need the S-completeness assumption in Theorem \ref{thm:equivalence-non-opp}.
\begin{thm}\label{thm:open-S-comp}
Let $\mathscr{X}$ be a smooth S-complete algebraic stack over a field $k$. Suppose $\mathscr{X}$ either has affine stabilizers, or has quasi-affine diagonal, or is Deligne-Mumford. Then an open substack $\mathscr{U} \subseteq \mathscr{X}$ is S-complete if and only if the open immersion $\mathscr{U} \hookrightarrow \mathscr{X}$ is axis-complete (e.g. if $\mathscr{U}$ is axis-complete). 
\end{thm}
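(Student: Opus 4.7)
The plan is to shuttle morphisms between $\overline{\mathrm{ST}}_R$ and its central ``cross'' $[\mathrm{Spec}(\kappa[x,y]/xy)/\mathbf{G}_{m,\kappa}]$ using two complementary tools: S-completeness of $\mathscr{X}$, which allows extending a morphism from $\overline{\mathrm{ST}}_R - \{0\}$ to $\overline{\mathrm{ST}}_R$, and Lemma \ref{lem:res-equiv}, which allows extending a morphism from the central cross to $\overline{\mathrm{ST}}_R$. The crucial geometric input shared by both directions is that $\overline{\mathrm{ST}}_R - \{0\}$ is covered by its two affine charts $\{x \neq 0\}, \{y \neq 0\}$, each of which is isomorphic to $\mathrm{Spec}(R)$ (e.g. on $\{x \neq 0\}$ the relation $y = \pi/x$ collapses $R[x,y,x^{-1}]/(xy-\pi)$ to $R[x,x^{-1}]$, on which the free weight-$1$ $\mathbf{G}_m$-action has quotient $\mathrm{Spec}(R)$); the closed point of each chart corresponds to the generic point of one axis of the central cross and therefore lies in the central fiber minus origin. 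Because $\mathrm{Spec}(R)$ has only two points, any open substack of $\overline{\mathrm{ST}}_R$ that contains this closed point contains the entire chart.

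For the direction ``axis-completeness of $\iota: \mathscr{U} \hookrightarrow \mathscr{X}$ implies S-completeness of $\mathscr{U}$'', I start with a morphism $h^\circ: \overline{\mathrm{ST}}_R - \{0\} \to \mathscr{U}$ for a DVR $R$ with residue field $\kappa$. Compose with $\iota$ and extend via S-completeness of $\mathscr{X}$ to $\bar{h}: \overline{\mathrm{ST}}_R \to \mathscr{X}$. The restriction of $\bar{h}$ to the central cross, paired with the compatible restriction of $h^\circ$ to the central cross minus origin, forms an axis-completeness datum for $\iota$; the hypothesis then yields a lift through $\mathscr{U}$, so $\bar{h}(0) \in \mathscr{U}$. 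Hence the open substack $\bar{h}^{-1}(\mathscr{U})$ contains both $\overline{\mathrm{ST}}_R - \{0\}$ and the origin, so it equals $\overline{\mathrm{ST}}_R$ and $\bar{h}$ factors through $\mathscr{U}$. Uniqueness of this extension in $\mathscr{U}$ follows from uniqueness in S-completeness of $\mathscr{X}$ together with the fact that $\iota$ is a monomorphism.

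For the direction ``S-completeness of $\mathscr{U}$ implies axis-completeness of $\iota$'', a typical axis-completeness datum for $\iota$ consists of compatible $g^\circ: [\mathrm{Spec}(\kappa[x,y]/xy)/\mathbf{G}_{m,\kappa}] - \{0\} \to \mathscr{U}$ and $g: [\mathrm{Spec}(\kappa[x,y]/xy)/\mathbf{G}_{m,\kappa}] \to \mathscr{X}$. Applying Lemma \ref{lem:res-equiv} to $\mathscr{X}$ with $R = \kappa[[\pi]]$, I extend $g$ to $\hat{g}: \overline{\mathrm{ST}}_R \to \mathscr{X}$. The topological observation above forces $\hat{g}^{-1}(\mathscr{U})$ to contain both affine charts, so $\hat{g}$ restricts to a morphism $\overline{\mathrm{ST}}_R - \{0\} \to \mathscr{U}$. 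S-completeness of $\mathscr{U}$ then produces an extension $\tilde{g}: \overline{\mathrm{ST}}_R \to \mathscr{U}$, and uniqueness of S-complete extensions in $\mathscr{X}$ identifies $\iota \circ \tilde{g}$ with $\hat{g}$; restricting to the central cross shows $g$ factors through $\mathscr{U}$, yielding the desired axis-completeness lift. Uniqueness of that lift follows once more from the monomorphism property of $\iota$.

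The main obstacle I expect is verifying the topological observation cleanly, namely recognising that the two affine pieces of $\overline{\mathrm{ST}}_R$ are honest $\mathrm{Spec}(R)$'s and that their closed points land on the central axes rather than at the origin $\mathrm{B}\mathbf{G}_{m,\kappa}$. Once this is in hand, the proof amounts to a bookkeeping combination of S-completeness of $\mathscr{X}$ and Lemma \ref{lem:res-equiv}; the smoothness and diagonal hypotheses on $\mathscr{X}$ enter only through the latter, and only for the harder direction.
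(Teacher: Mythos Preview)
Your proposal is correct and follows essentially the same route as the paper: in one direction you extend $\overline{\mathrm{ST}}_R-\{0\}\to\mathscr{U}$ to $\overline{\mathrm{ST}}_R\to\mathscr{X}$ via S-completeness of $\mathscr{X}$ and then use axis-completeness on the restriction to the central cross to conclude $\hat{f}(0)\in\mathscr{U}$, while in the other you lift the cross map to $\overline{\mathrm{ST}}_R$ via Lemma~\ref{lem:res-equiv}, observe that $\overline{\mathrm{ST}}_R-\{0\}$ lands in $\mathscr{U}$ by openness, and invoke S-completeness of $\mathscr{U}$. Your write-up is in fact more careful than the paper's on two points the latter leaves implicit --- the topological reason why $\hat{g}^{-1}(\mathscr{U})\supseteq\overline{\mathrm{ST}}_R-\{0\}$, and the appeal to uniqueness of S-complete extensions in $\mathscr{X}$ to identify $\iota\circ\tilde{g}$ with $\hat{g}$.
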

\begin{proof}
Suppose the open immersion $\mathscr{U} \hookrightarrow \mathscr{X}$ is axis-complete. For any DVR $R$ with residue field $\kappa$ and any morphism $\overline{\mathrm{ST}}_R-\{0\} \to \mathscr{U}$, we can complete it in $\mathscr{X}$ as $\mathscr{X}$ is S-complete:
\[
\begin{tikzcd}
\overline{\mathrm{ST}}_R-\{0\} \ar[r] \ar[d,hook] \ar[dr,"\circ"] & \mathscr{U} \ar[d,hook] \\
\overline{\mathrm{ST}}_R \ar[r,"\exists ! \hat{f}"'] & \mathscr{X}.
\end{tikzcd}
\]
It suffices to show $\hat{f}(0) \in \mathscr{U}(\kappa)$. Note that the restriction 
\[
f:=\hat{f}|_{xy=0}: [\mathrm{Spec}(\kappa[x,y]/xy)/\mathbf{G}_{m,\kappa}] \to \mathscr{X}
\]
maps $[\mathrm{Spec}(\kappa[x,y]/xy)/\mathbf{G}_{m,\kappa}]-\{0\}$ to $\mathscr{U}$, so $\hat{f}(0)=f(0) \in \mathscr{U}(\kappa)$ as the open immersion $\mathscr{U} \hookrightarrow \mathscr{X}$ is axis-complete.

Suppose $\mathscr{U}$ is S-complete. For any field $\kappa$ and any commutative diagram 
\[
\begin{tikzcd}
\left[\mathrm{Spec}(\kappa[x,y]/xy)/\mathbf{G}_{m,\kappa}\right]-\{0\} \ar[r] \ar[d,hook] & \mathscr{U} \ar[d,hook] \\
\left[\mathrm{Spec}(\kappa[x,y]/xy)/\mathbf{G}_{m,\kappa}\right] \ar[r,"f"'] & \mathscr{X}
\end{tikzcd} 
\]
of solid arrows, it suffices to show $f(0) \in \mathscr{U}(\kappa)$. By Lemma \ref{lem:res-equiv} there exists a morphism $\hat{f}: \overline{\mathrm{ST}}_R \to \mathscr{X}$ with $R:=\kappa[[\pi]]$ such that $f=\hat{f}|_{xy=0}$, and $\hat{f}$ maps $\overline{\mathrm{ST}}_R-\{0\}$ to $\mathscr{U}$ since $\mathscr{U} \subseteq \mathscr{X}$ is open. Then $f(0)=\hat{f}(0) \in \mathscr{U}(\kappa)$ as $\mathscr{U}$ is S-complete.
\end{proof}
\begin{lem}\label{lem:open-axis-complete}
An open immersion $\mathscr{U} \hookrightarrow \mathscr{C}oh_n$ is axis-complete if and only if for any geometric points $\mathcal{E}, \mathcal{E}' \in \mathscr{U}(\kappa)$ with opposite filtrations $\mathcal{E}^\bullet,\mathcal{E}'_\bullet$, we have $\mathrm{gr}(\mathcal{E}^\bullet)=\mathrm{gr}(\mathcal{E}'_\bullet) \in \mathscr{U}(\kappa)$.
\end{lem}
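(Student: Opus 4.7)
The plan is to reduce both directions to a direct translation via Example \ref{eg:opp-in-Coh}, combined with the observation that factoring through an open substack is a pointwise condition. First, I would observe that the ``uniqueness'' clause in Definition \ref{defn:axis-comp} is automatic for an open immersion $\mathscr{U} \hookrightarrow \mathscr{C}oh_n$ since open immersions are monomorphisms; thus axis-completeness amounts to the existence of the dotted lift.

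Next, I would apply Example \ref{eg:opp-in-Coh} to the ambient stack $\mathscr{C}oh_n$ (for which the condition $\mathrm{gr}(\mathcal{E}^\bullet) \in \mathscr{C}oh_n(\kappa)$ is vacuous): a morphism $f: [\mathrm{Spec}(\kappa[x,y]/xy)/\mathbf{G}_{m,\kappa}] \to \mathscr{C}oh_n$ corresponds precisely to a pair of geometric points $\mathcal{E},\mathcal{E}' \in \mathscr{C}oh_n(\kappa)$ together with opposite filtrations $\mathcal{E}^\bullet, \mathcal{E}'_\bullet$. The complement $[\mathrm{Spec}(\kappa[x,y]/xy)/\mathbf{G}_{m,\kappa}] - \{0\}$ is the disjoint union of the two punctured axes, each isomorphic to $\mathrm{Spec}(\kappa)$, and under $f$ they go to $\mathcal{E}$ and $\mathcal{E}'$ respectively; the unique closed point $0$ goes to the common graded sheaf $\mathrm{gr}(\mathcal{E}^\bullet) = \mathrm{gr}(\mathcal{E}'_\bullet)$. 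Since $\mathscr{U} \subseteq \mathscr{C}oh_n$ is open, $f$ factors through $\mathscr{U}$ if and only if the image of every point lies in $\mathscr{U}$; equivalently, the restriction of $f$ to the complement of $\{0\}$ lands in $\mathscr{U}$ exactly when $\mathcal{E}, \mathcal{E}' \in \mathscr{U}(\kappa)$, while the whole morphism $f$ lands in $\mathscr{U}$ exactly when moreover $\mathrm{gr}(\mathcal{E}^\bullet) \in \mathscr{U}(\kappa)$.

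With these identifications, both implications become bookkeeping. For the forward direction, given $\mathcal{E},\mathcal{E}' \in \mathscr{U}(\kappa)$ with opposite filtrations $\mathcal{E}^\bullet,\mathcal{E}'_\bullet$, the associated $f: [\mathrm{Spec}(\kappa[x,y]/xy)/\mathbf{G}_{m,\kappa}] \to \mathscr{C}oh_n$ restricts on the complement of $\{0\}$ to a morphism through $\mathscr{U}$; axis-completeness supplies a lift to $\mathscr{U}$, which forces $\mathrm{gr}(\mathcal{E}^\bullet) = \mathrm{gr}(\mathcal{E}'_\bullet) \in \mathscr{U}(\kappa)$. For the backward direction, any commutative square as in Definition \ref{defn:axis-comp} gives data $(\mathcal{E},\mathcal{E}',\mathcal{E}^\bullet,\mathcal{E}'_\bullet)$ with $\mathcal{E},\mathcal{E}' \in \mathscr{U}(\kappa)$; the hypothesis says $\mathrm{gr}(\mathcal{E}^\bullet) \in \mathscr{U}(\kappa)$, and openness of $\mathscr{U}$ then yields the required factorization.

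I do not expect any genuine obstacle here: the entire content sits in Example \ref{eg:opp-in-Coh} (which is the Rees-style translation), and the rest is the elementary fact that a morphism with target an open substack factors through it iff this holds pointwise. The only mild care is to apply Example \ref{eg:opp-in-Coh} with target $\mathscr{C}oh_n$ rather than $\mathscr{U}$, so that the graded condition becomes part of the conclusion rather than part of the input.
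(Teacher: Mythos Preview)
Your proposal is correct and follows essentially the same route as the paper's proof: both directions are obtained by translating via Example~\ref{eg:opp-in-Coh} (applied with target $\mathscr{C}oh_n$) and then using that factoring through the open substack $\mathscr{U}$ is detected pointwise. Your additional remarks on uniqueness being automatic for monomorphisms and on the explicit description of the complement of $\{0\}$ are helpful clarifications, but the argument is the same.
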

\begin{proof}
For the only if part: by Example \ref{eg:opp-in-Coh}, any geometric points $\mathcal{E},\mathcal{E}' \in \mathscr{U}(\kappa)$ with opposite filtrations $\mathcal{E}^\bullet,\mathcal{E}'_\bullet$ define a morphism $f: [\mathrm{Spec}(\kappa[x,y]/xy)/\mathbf{G}_{m,\kappa}] \to \mathscr{C}oh_n$ and $f$ maps $[\mathrm{Spec}(\kappa[x,y]/xy)/\mathbf{G}_{m,\kappa}]-\{0\}$ to $\mathscr{U}$ since $\mathscr{U} \subseteq \mathscr{C}oh_n$ is open. Then $f(0)=\mathrm{gr}(\mathcal{E}^\bullet)=\mathrm{gr}(\mathcal{E}'_\bullet) \in \mathscr{U}(\kappa)$ as the open immersion $\mathscr{U} \hookrightarrow \mathscr{C}oh_n$ is axis-complete.

For the if part: for any field $\kappa$ and any commutative diagram
\[
\begin{tikzcd}
\left[\mathrm{Spec}(\kappa[x,y]/xy)/\mathbf{G}_{m,\kappa}\right]-\{0\} \ar[r] \ar[d,hook] & \mathscr{U} \ar[d,hook] \\
\left[\mathrm{Spec}(\kappa[x,y]/xy)/\mathbf{G}_{m,\kappa}\right] \ar[r,"f"'] & \mathscr{C}oh_n
\end{tikzcd} 
\]
of solid arrows, it suffices to show $f(0) \in \mathscr{U}(\kappa)$. By Example \ref{eg:opp-in-Coh}, the morphism $f$ corresponds to two geometric points $\mathcal{E},\mathcal{E}' \in \mathscr{U}(\kappa)$ together with opposite filtrations $\mathcal{E}^\bullet,\mathcal{E}'_\bullet$, and $f(0)=\mathrm{gr}(\mathcal{E}^\bullet)=\mathrm{gr}(\mathcal{E}'_\bullet)$. This is in $\mathscr{U}(\kappa)$ by assumption.
\end{proof}
This reproves \cite[Proposition 3.17]{dario-zhang}.
\begin{cor}\label{cor:nonsep=id}
An open substack $\mathscr{U}$ is S-complete if and only if for any geometric points $\mathcal{E}, \mathcal{E}' \in \mathscr{U}(\kappa)$ with opposite filtrations $\mathcal{E}^\bullet,\mathcal{E}'_\bullet$, we have $\mathrm{gr}(\mathcal{E}^\bullet)=\mathrm{gr}(\mathcal{E}'_\bullet) \in \mathscr{U}(\kappa)$.
\end{cor}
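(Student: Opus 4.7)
The plan is to deduce this as a direct combination of Theorem \ref{thm:open-S-comp} and Lemma \ref{lem:open-axis-complete}, applied to the ambient stack $\mathscr{X} = \mathscr{C}oh_n$.

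First I would verify that $\mathscr{C}oh_n$ meets the hypotheses of Theorem \ref{thm:open-S-comp}. Smoothness of $\mathscr{C}oh_n$ follows from the vanishing of the obstruction space $\mathrm{Ext}^2(\mathcal{E},\mathcal{E})$ for coherent sheaves on a smooth projective curve (so deformations are unobstructed), and $\mathscr{C}oh_n$ has affine (in fact linear) stabilizers, so the ``affine stabilizers'' clause of the hypothesis is satisfied. S-completeness of $\mathscr{C}oh_n$ is a known result (used implicitly throughout the paper, and recorded in \cite{dario-zhang}). With these verified, Theorem \ref{thm:open-S-comp} applies and gives the equivalence: $\mathscr{U} \subseteq \mathscr{C}oh_n$ is S-complete if and only if the open immersion $\mathscr{U} \hookrightarrow \mathscr{C}oh_n$ is axis-complete.

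Next I would invoke Lemma \ref{lem:open-axis-complete}, which is precisely the translation of axis-completeness for an open immersion into $\mathscr{C}oh_n$ into the condition that whenever two geometric points $\mathcal{E},\mathcal{E}' \in \mathscr{U}(\kappa)$ carry opposite filtrations $\mathcal{E}^\bullet,\mathcal{E}'_\bullet$, the common associated graded $\mathrm{gr}(\mathcal{E}^\bullet)=\mathrm{gr}(\mathcal{E}'_\bullet)$ lies in $\mathscr{U}(\kappa)$. Chaining the two equivalences yields the corollary.

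There is no real obstacle here, since all the substantive work has been packaged into Theorem \ref{thm:open-S-comp} (which in turn rests on the deformation-theoretic extension Lemma \ref{lem:res-equiv}) and into the concrete translation Lemma \ref{lem:open-axis-complete} via the Rees correspondence of Example \ref{eg:opp-in-Coh}. The only mild point worth being explicit about is the verification of smoothness and S-completeness of $\mathscr{C}oh_n$ so that Theorem \ref{thm:open-S-comp} is legitimately applicable; once that is in hand, the proof is a one-line chain of ``iff''s.
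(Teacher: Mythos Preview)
Your proposal is correct and matches the paper's intended argument exactly: the corollary is stated without proof precisely because it is the concatenation of Theorem \ref{thm:open-S-comp} (applied with $\mathscr{X}=\mathscr{C}oh_n$) and Lemma \ref{lem:open-axis-complete}. The hypotheses on $\mathscr{C}oh_n$ you check---smoothness, affine stabilizers, S-completeness---are the same ones already used implicitly to derive Corollary \ref{cor:non=opp-in-Coh} from Theorem \ref{thm:equivalence-non-opp}, so nothing new is needed.
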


\bibliographystyle{plain}

\bibliography{S-equivalence}

\end{document}